\documentclass[12pt]{amsart}
 
\pdfoutput=1
\usepackage[margin=1in]{geometry}
\usepackage{amsmath,amsthm,amssymb,amsrefs,bbm,color,esint,esvect,float,graphicx,mathrsfs}
\usepackage[bookmarksnumbered, colorlinks, plainpages,linkcolor=blue,anchorcolor=blue,citecolor=blue,urlcolor=blue]{hyperref}
\usepackage{todonotes}

\DeclareMathOperator{\supp}{supp \,}

\usepackage{euscript,latexsym}
\usepackage{accents}

\usepackage{epstopdf}
\AppendGraphicsExtensions{.tif}

\newcommand{\R}{\mathbb{R}}

 \newcommand{\vertiii}[1]{{\left\vert\kern-0.25ex\left\vert\kern-0.25ex\left\vert #1 
    \right\vert\kern-0.25ex\right\vert\kern-0.25ex\right\vert}}

 \newtheorem{thm}{Theorem}[section]
 \newtheorem{lemma}[thm]{Lemma}
 
 \newtheorem{prop}[thm]{Proposition}
 \newtheorem{rem}[thm]{Remark}

 \numberwithin{equation}{section}

\theoremstyle{definition}

\begin{document}

\title[Sharp weighted weak-type bounds for commutators of singular integrals]{Sharp weighted weak-type bounds for commutators of singular integrals}

\author{Adam Mair}
\address{Adam Mair, Department of Mathematics and Statistics, Bucknell University, Lewisburg, PA 17837, USA}
\email{adam.mair@bucknell.edu}

\author{\'Oscar Andr\'es Ram\'irez}
\address{\'Oscar Andr\'es Ram\'irez, School of Mathematical and Statistical Sciences, Clemson University, Clemson, SC 29634, USA}
\email{oscarar@clemson.edu}

\author{Cody B. Stockdale}
\address{Cody B. Stockdale, School of Mathematical and Statistical Sciences, Clemson University, Clemson, SC 29634, USA}
\email{cbstock@clemson.edu}
\thanks{C. B. Stockdale is supported by the National Science Foundation, DMS grant No. 2555710}

\begin{abstract}
We establish sharp weak-type bounds for commutators of Calder\'on--Zygmund operators and $\text{BMO}(\mathbb{R}^n)$ symbols with respect to $A_p$ weights. We prove that the bound \looseness=-1
$$
    \|[b,T]\|_{L^p(w)\rightarrow L^{p,\infty}(w)}\lesssim [w]_{A_p}^{\max(2,p')}
$$
for $p \in (1,\infty)$ and $w \in A_p$  is sharp for general $T \in \text{CZO}(\mathbb{R}^n)$ and $b \in \text{BMO}(\mathbb{R}^n)$. In the case of the standard logarithmic symbol, we show that the estimate improves to  
$$
    \|[\log|\cdot|,T]\|_{L^p(w)\rightarrow L^{p,\infty}(w)}\lesssim [w]_{A_p}^{\max\big(2,\frac{1}{p}+\frac{p'}{p}\big)}
$$
and that this dependence is optimal for this symbol. Our upper bound for the logarithmic symbol follows from the pointwise control of the commutator by a sum of the Hardy--Littlewood maximal operator and the squares of the radial Hardy operator and its adjoint, while our lower bounds are consequences of a general result for singular integral operators.
\end{abstract}

\keywords{Weighted norm inequalities, commutators, weak-type estimates, sharp bounds}

\subjclass[2020]{Primary 42B20; Secondary 42B25, 42B35, 47B47}

\maketitle


\section{Introduction}\label{IntroductionSection}
\allowdisplaybreaks[4]

We study weighted bounds for commutators $[b,T]$ of Calder\'on--Zygmund operators (CZOs) $T$ with multiplication by $\text{BMO}(\mathbb{R}^n)$ symbols $b$. Recall that $[b,T]$ is given by 
$$
    [b,T]f = bT(f) - T(bf).
$$
Following the seminal work of Coifman, Rochberg, and Weiss in \cite{CRW1976}, the $L^p(\mathbb{R}^n)$-boundedness of $[b,T]$ is generally dictated by the membership of $b$ in $\text{BMO}(\mathbb{R}^n)$. This theory extends to weighted spaces with respect to Muckenhoupt $A_p$ weights. In particular, following the work of Chung in \cite{C2011}, Chung, Pereyra, and P\'erez established the following sharp weighted strong-type $(p,p)$ bound in \cite{CPP2012}: if $p \in (1,\infty)$, $T \in \text{CZO}(\mathbb{R}^n)$, and $b \in \text{BMO}(\mathbb{R}^n)$, then 
\begin{align}\label{eq:SharpStrong}
	\|[b,T]\|_{L^p(w)\rightarrow L^p(w)}\lesssim [w]_{A_p}^{2\max\big(1,\frac{p'}{p}\big)}
\end{align}
for all $w \in A_p$, where $p':=\frac{p}{p-1}$. Moreover, they showed that this bound is the best possible among all choices of such $b$ and $T$. They obtained the upper bound via the Cauchy integral trick, and proved optimality for the logarithmic symbol $b(x) = \log|x|$ and the Hilbert transform $H$ using standard examples involving power weights. This upper bound can also be achieved by sparse domination methods; see \cite{LOR2017}. In \cite{P1995}, P\'erez extended the theory to the $p=1$ endpoint by establishing the $L\log L$ weak-type behavior of commutators; see \cites{A2018, CP2011, CM2012, PP2001, P1997, OC2011,CR2020} and references therein for more on the weak-type properties of commutators. \looseness=-1

We here consider sharp weighted weak-type $(p,p)$ bounds of commutators for $p \in (1,\infty)$. As $L^p(w)$ embeds into $L^{p,\infty}(w)$, the strong-type bound \eqref{eq:SharpStrong} implies that the weak-type $(p,p)$ bound for general commutators holds with norm at most a constant multiple of $[w]_{A_p}^{2\max(1,\frac{p'}{p})}$ for any $w \in A_p$. In \cite{PR2018}, P\'erez and Rivera-R\'ios improved this dependence to  
\begin{align}\label{eq:GeneralUpper}
	\|[b,T]\|_{L^p(w)\rightarrow L^{p,\infty}(w)} \lesssim [w]_{A_p}^{\max(2,p')}
\end{align}
using the sparse domination of \cite{LOR2017} and iterating the known sharp bounds of sparse operators. They also obtained a lower bound for the commutator of the Hilbert transform with the particular symbol $b(x) = \log|x| \in \text{BMO}(\mathbb{R})$. Given $p \in (1,\infty)$, $b \in \text{BMO}(\mathbb{R}^n)$, and $T \in \text{CZO}(\mathbb{R}^n)$, let $\varphi_{b,T}\colon[1,\infty)\rightarrow[0,\infty)$ be the asymptotically smallest function such that 
$$
    \|[b,T]\|_{L^p(w)\rightarrow L^{p,\infty}(w)} \leq
\varphi_{b,T}([w]_{A_p})
$$
holds for any $w \in A_p$, namely
$$
    \varphi_{b,T}(t) := \sup_{\substack{w \in A_p\\ [w]_{A_p}\leq t}} \|[b,T]\|_{L^p(w)\rightarrow L^{p,\infty}(w)}. 
$$ 
It was proved in \cite{PR2018} that 
\begin{align}\label{eq:PR2018Lower}
	\varphi_{\log|\cdot|,H}(t) \gtrsim
t^{\max\big(2,\frac{p'}{p}\big)}
\end{align}
as a consequence of the unweighted behavior of $\|[\log|\cdot|,H]\|_{L^{p}(\mathbb{R})\rightarrow L^{p,\infty}(\mathbb{R})}$ as $p\rightarrow 1^+$ and $p\rightarrow \infty$, following the methods developed in \cite{LPR2015}. While \eqref{eq:GeneralUpper} and \eqref{eq:PR2018Lower} show that the dependence $[w]_{A_p}^2$ is generally sharp for the weighted weak-type $(p,p)$ bounds when $p\ge 2$, their results exhibit a gap between the upper and lower bounds for $1<p<2$. 

The purpose of this paper is to close this gap. We first improve the lower bound of \eqref{eq:PR2018Lower}.
\begin{thm}\label{thm:LogLower}
If $p \in (1,\infty)$, then  
$$
	\varphi_{\log|\cdot|,H}(t) \gtrsim t^{\max(2,\frac{1}{p}+\frac{p'}{p})}.
$$
\end{thm}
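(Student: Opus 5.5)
The factor $t^{2}$ is already provided by the lower bound \eqref{eq:PR2018Lower} of P\'erez and Rivera-R\'ios, since $\max(2,\frac{p'}{p})\ge 2$. It therefore suffices to prove
$$
\varphi_{\log|\cdot|,H}(t)\gtrsim t^{\frac1p+\frac{p'}{p}}.
$$
Note that $\frac1p+\frac{p'}{p}=\frac{2p-1}{p(p-1)}$, and this exceeds $2$ exactly when $1<p<1+\frac{1}{\sqrt2}$. I plan to get this bound from a one-parameter family of power weights on $\mathbb{R}$, testing $[\log|\cdot|,H]$ on the dual weight restricted to $(0,1)$.

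For small $\delta\in(0,1)$, set $w_\delta(x)=|x|^{(p-1)(1-\delta)}$.

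\emph{Size of the weight.} The standard computation for power weights $|x|^a$ with $-1<a<p-1$ gives $[|x|^a]_{A_p}\simeq \frac{1}{1+a}\big(\frac{1}{1-a/(p-1)}\big)^{p-1}$. Here this yields $[w_\delta]_{A_p}\simeq \delta^{-(p-1)}=:t$.

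\emph{Test function.} The dual weight is $\sigma_\delta=w_\delta^{1-p'}=|x|^{-(1-\delta)}$. Take $f=\sigma_\delta\chi_{(0,1)}$. Then
$$
\|f\|_{L^p(w_\delta)}=\sigma_\delta((0,1))^{1/p}=\delta^{-1/p}.
$$

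\emph{Pointwise lower bound.} For $x>1$ and $0<y<1$, the kernel $\frac{\log x-\log y}{x-y}$ is positive and comparable to $\frac{\log x-\log y}{x}$ when $x\ge 2$. Hence, up to the normalizing constant of $H$,
$$
|[\log|\cdot|,H]f(x)|\gtrsim \frac1x\int_0^1(\log x-\log y)\,y^{\delta-1}\,dy=\frac1x\Big(\frac{\log x}{\delta}+\frac1{\delta^2}\Big)\ge \frac{1}{\delta^2x}.
$$

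\emph{Weak-type quotient.} Fix $R\ge 4$ and put $\lambda=\frac{1}{\delta^2R}$. Then the level set $\{|[\log|\cdot|,H]f|>\lambda\}$ contains $(2,R)$. Moreover $w_\delta((2,R))\simeq R^{\,p-\delta(p-1)}$ as long as $R^{p-\delta(p-1)}\gg 2^{p}$, which holds for $R$ a large absolute constant. Therefore
$$
\|[\log|\cdot|,H]f\|_{L^{p,\infty}(w_\delta)}\ge \lambda\, w_\delta((2,R))^{1/p}\gtrsim \delta^{-2}R^{-\delta(p-1)/p}\simeq \delta^{-2}.
$$
Dividing by $\|f\|_{L^p(w_\delta)}$ gives
$$
\|[\log|\cdot|,H]\|_{L^p(w_\delta)\to L^{p,\infty}(w_\delta)}\gtrsim \delta^{-2+\frac1p}.
$$

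\emph{Matching the exponent.} Since $t=\delta^{-(p-1)}$, we need $\delta^{-2+\frac1p}=t^{\gamma}$ with $\gamma(p-1)=2-\frac1p=\frac{2p-1}{p}$. This gives $\gamma=\frac{2p-1}{p(p-1)}=\frac1p+\frac{p'}{p}$, as required. Letting $\delta\to0$ covers all large $t$, and combining with \eqref{eq:PR2018Lower} proves the theorem.

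\emph{Main obstacle.} The delicate point is making sure the precise $\delta$-dependence survives in each step. This means computing $[w_\delta]_{A_p}\simeq\delta^{-(p-1)}$ carefully, including the sup over intervals away from the origin. It also means verifying that the $\delta^{-2}$ term, which comes from the $-\log y$ part of the commutator kernel integrated against $y^{\delta-1}$, is not lost.

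Two things distinguish the commutator from $H$ itself here. The commutator gains the extra factor $\delta^{-1}$ from this logarithmic moment. And the choice $f=\sigma_\delta\chi_{(0,1)}$ makes the $L^p(w_\delta)$ norm cost only $\delta^{-1/p}$, so one factor $\delta^{-1/p'}$ survives on top of the $\delta^{-1}$. Positivity of the kernel for $x>1>y>0$ means no cancellation needs to be controlled.
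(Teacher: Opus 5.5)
Your proof is correct. In its key step it takes a different route from the paper's proof (Proposition~\ref{prop:thm1.1.1}).

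For the exponent $2$ you cite \eqref{eq:PR2018Lower}, which is legitimate. The paper instead reproves it via Theorem~\ref{thm:SepVar} with $w=|x|^{\alpha-1}$, $\phi=|\log x|\chi_{(0,1)}$, $\psi=\frac{1}{\pi y}\chi_{(1,\infty)}$, so that the theorem is self-contained.

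For the exponent $\frac1p+\frac{p'}{p}$, both arguments use the weight $w_\delta=|x|^{(p-1)(1-\delta)}$ and, in effect, the test function $f=\sigma_\delta\chi_{(0,1)}$. This is the extremizer $\psi^{p'-1}\sigma$ of Theorem~\ref{thm:SepVar} for $\psi=\chi_{(0,1)}$. However, the two arguments keep opposite halves of the kernel $\frac{\log x-\log y}{\pi(x-y)}$.

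\emph{The paper's route.} It discards $-\log y$ and takes $\phi=\frac{\log x}{\pi x}\chi_{(1,\infty)}$. It must then extract the second factor $\delta^{-1}$ from the weak norm of $\frac{\log x}{x}$ at the scale $x\approx e^{1/\delta}$. This is where the restriction $\delta\le\frac{1}{1+\log 2}$ and the monotonicity of $\frac{\log t}{t}$ enter.

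\emph{Your route.} You discard $\log x$ instead. The logarithmic moment $\int_0^1(-\log y)\,y^{\delta-1}\,dy=\delta^{-2}$ then supplies the whole gain at once, and the weak norm only needs to be tested on the fixed interval $(2,R)$, where it is of order one. In the language of Theorem~\ref{thm:SepVar}, this is the choice $\phi=\frac{1}{\pi x}\chi_{(2,R)}$, $\psi=-\log y\,\chi_{(0,1)}$, with $\|\psi\|_{L^{p'}(\sigma_\delta)}=\Gamma(p'+1)^{1/p'}\delta^{-2+\frac1p}$. It is the weak-type analogue of the second strong-type example in Remark~\ref{rem:1.5}, and close in spirit to the heuristic sketched in the introduction. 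Your $f$ is not the extremizer for this $\psi$, but it loses only the constant $\Gamma(p'+1)^{1/p'}$.

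\emph{Comparison.} Your version is more elementary and needs no large-scale analysis. The paper's version fits its general separation-of-variables framework. That framework also handles the justification you skip: that the off-support kernel formula for $[\log|\cdot|,H]f$ is valid on $(2,R)$. Here this is immediate, since $f\in L^q$ for $1<q<\frac{1}{1-\delta}$, $f$ has compact support, and $f\log|\cdot|\in L^1$.

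One cosmetic slip: the condition ensuring $w_\delta((2,R))\approx R^{p-\delta(p-1)}$ is $R^{p-\delta(p-1)}\ge 2\cdot 2^{p-\delta(p-1)}$, not a comparison with $2^p$. It holds for every $R\ge 4$.
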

\noindent Theorem \ref{thm:LogLower} follows from explicit examples involving power weights. To obtain the exponent $\frac{1}{p}+\frac{p'}{p}$, let $\delta \in (0,1)$, $w(x) = |x|^{(1-\delta)(p-1)}$, and $f(x) = |x|^{\delta-1}\chi_{(0,1)}(x)$. One can easily verify that  \looseness=-1
$$
    [w]_{A_p} \approx \delta^{1-p} \quad\text{and}\quad \|f\|_{L^p(w)} = \delta^{-\frac{1}{p}},
$$
while a change of variables and elementary estimates give 
$$
    |[b,H]f(x)| = \frac{1}{\pi}\int_0^1 \frac{\log(\frac{x}{y})}{x-y}y^{\delta-1}\,dy\gtrsim x^{\delta-1}\delta^{-2}
$$
for $x \in (0,1)$. These properties imply that 
\begin{equation*}
    \|[b,H]f\|_{L^{p,\infty}(w)} \gtrsim \delta^{-2} \approx [w]_{A_p}^{\frac{1}{p}+\frac{p'}{p}}\|f\|_{L^p(w)}.
\end{equation*}
Take $\delta\rightarrow 0$. The lower bound with exponent $2$ can similarly be established using the following ingredients: $\delta \in (0,1)$, $w(x) = |x|^{\delta-1}$, and $f(x) = |x|^{\delta(1-p')}\chi_{(1,\infty)}(x)$.

While Theorem \ref{thm:LogLower} sharpens the lower bound in the special case $b(x)=\log|x|$, it is still strictly smaller than the general upper bound of \eqref{eq:GeneralUpper} when $1<p<2$. Our next result improves the upper bound \eqref{eq:GeneralUpper} for the logarithmic symbol, closing the gap in this case. \looseness=-1
\begin{thm}\label{thm:LogUpper}
If $p \in (1,\infty)$ and $T\in \text{CZO}(\mathbb{R}^n)$, then 
$$
	\|[\log|\cdot|,T]\|_{L^p(w)\rightarrow L^{p,\infty}(w)}\lesssim [w]_{A_p}^{\max\big(2,\frac{1}{p}+\frac{p'}{p}\big)}
$$
for all $w \in A_p$. 
\end{thm}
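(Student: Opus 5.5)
Following the abstract, the plan is to dominate the commutator pointwise by
$$
|[\log|\cdot|,T]f(x)|\lesssim Mf(x) + \mathcal{H}^2(|f|)(x) + (\mathcal{H}^*)^2(|f|)(x) + (\text{possibly } M^2 f(x)),
$$
where $\mathcal{H}g(x)=\frac{1}{|x|^n}\int_{|y|<|x|}g(y)\,dy$ is the radial Hardy operator and $\mathcal{H}^*g(x)=\int_{|y|>|x|}g(y)\frac{dy}{|y|^n}$ is its adjoint. We then bound each piece in $L^p(w)\to L^{p,\infty}(w)$ separately. The decomposition is the main obstacle. Write
$$
[b,T]f(x)=\int (\log|x|-\log|y|)K(x,y)f(y)\,dy
$$
and split into the region $|y|<|x|/2$, the region $|y|>2|x|$, and the annular region $|x|/2\le|y|\le 2|x|$.

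\textbf{Off-diagonal regions.} On $|y|<|x|/2$ we have $|K(x,y)|\lesssim |x|^{-n}$, so that piece is bounded by
$$
|x|^{-n}\int_{|y|<|x|}\log\frac{|x|}{|y|}\,|f(y)|\,dy .
$$
Using $\log\frac{|x|}{|y|}=\int_{|y|<|z|<|x|}\frac{dz}{c_n|z|^n}$ and Fubini, this kernel is exactly (a multiple of) the composition $\mathcal{H}\mathcal{H}$. Symmetrically, on $|y|>2|x|$ we have $|K|\lesssim|y|^{-n}$, and we obtain $\mathcal{H}^*\mathcal{H}^*$.

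\textbf{Annular region.} Here $\log|x|-\log|y|$ is bounded, but the kernel is singular. I would handle this via the scaling invariance of $\log$: by dilation, $[\log|\cdot|,T]$ commutes with dilation up to replacing $T$ with a dilated CZO with the same constants. So on each dyadic annulus $A_k=\{2^k\le|x|<2^{k+1}\}$ one may replace $\log|x|$ by $\log|x|-k\log 2$, which is bounded on a neighborhood of $A_k$. Write $f=f\chi_{\tilde A_k}+\text{rest}$, where $\tilde A_k$ is the enlarged annulus. The rest is absorbed into the Hardy terms, adjusting the splitting so those far pieces have size comparable to the above. For the local part, $[b_k,T](f\chi_{\tilde A_k})=b_kT(f\chi_{\tilde A_k})-T(b_kf\chi_{\tilde A_k})$ with $|b_k|\lesssim1$ there. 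This is not pointwise controlled by $M$, only by something like $|T(f\chi_{\tilde A_k})|+|T(b_kf\chi_{\tilde A_k})|$.

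So rather than insisting on a pure pointwise bound, I would accept the local terms $T_{\text{loc}}$, where $\sum_k\chi_{A_k}|T(g_k\chi_{\tilde A_k})|$ with $|g_k|\le C|f|$. These satisfy the weak bound via the known sharp weak estimate $\|T\|_{L^p(w)\to L^{p,\infty}(w)}\lesssim[w]_{A_p}$, applied to the functions $g_k\chi_{\tilde A_k}$ and summed using the bounded overlap of the $\tilde A_k$. Alternatively, one could use the sparse domination of $T$, since a local piece costs only $[w]_{A_p}^{\max(1,1/(p-1))}$ in the strong norm, which is at most $[w]^{\max(2,\frac1p+\frac{p'}{p})}$. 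Either way this costs at most $[w]_{A_p}\le$ the target power (it is $\le[w]^{p'}$ for strong type; check that $\max(1,p'-1)\le\max(2,\frac1p+\frac{p'}{p})$, which holds since $\frac1p+\frac{p'}{p}=p'-1+\frac1p\cdot\ldots$). The $Mf$ term costs $[w]_{A_p}^{1/p}$ in weak type.

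\textbf{Hardy operator bounds.} It remains to show
$$
\|\mathcal{H}^2\|_{L^p(w)\to L^{p,\infty}(w)}\lesssim[w]_{A_p}^{\frac1p+\frac{p'}{p}}\quad\text{and}\quad\|(\mathcal{H}^*)^2\|_{L^p(w)\to L^{p,\infty}(w)}\lesssim[w]_{A_p}^{\max(2,\ldots)}.
$$
For $\mathcal{H}^2 f$, which is radially decreasing-type in its averaging, I would use the level-set structure directly. Since $\mathcal{H}^2|f|(x)\le \mathcal{H}(\mathcal{H}|f|)$ and $\mathcal{H}|f|(x)\lesssim$ an average over the ball $B(0,|x|)$, one factor of $\mathcal{H}$ gives weak type with $[w]_{A_p}^{1/p}$ (as for $M$ on centered balls). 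The other factor, $\mathcal{H}$ acting strongly on $L^p(w)$, costs $[w]_{A_p}^{p'/p}$, which follows from the $A_p$ condition on balls centered at $0$ via a Hardy-inequality argument with $\sigma=w^{1-p'}$. This yields $\frac1p+\frac{p'}{p}$. For $(\mathcal{H}^*)^2$, the dual argument gives strong bounds, costing at most $[w]^{2}$ or $[w]^{p'}$. I must verify that this is $\le\max(2,\frac1p+\frac{p'}{p})$, plausibly via duality from the weak-type bounds of $\mathcal{H}^2$ on $L^{p'}(\sigma)$, where the exponent becomes $\frac1{p'}+\frac{p}{p'}$ in $[\sigma]_{A_{p'}}=[w]^{p'-1}$, giving exactly $1+\ldots$. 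The exponent bookkeeping for this adjoint piece, along with making the annular decomposition rigorous, is where I expect the work to lie.
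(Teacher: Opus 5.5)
Your outline follows the paper's in its main lines. The regions $|y|<|x|/2$ and $|y|>2|x|$ give $\mathcal{H}^2|f|$ and $(\mathcal{H}^*)^2|f|$ exactly as you compute, and $\mathcal{H}^2$ costs $[w]_{A_p}^{\frac1p+\frac{p'}{p}}$. No Hardy-inequality argument is needed for that: $\mathcal{H}|f|\lesssim Mf$ pointwise since $B(0,|x|)\subseteq B(x,2|x|)$, and Buckley's weak and strong bounds for $M$ give the two factors.

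The genuine gap is the adjoint piece, which you leave open and which the theorem cannot do without. You list $[w]_{A_p}^2$ and $[w]_{A_p}^{p'}$ as candidate costs, but $[w]_{A_p}^{p'}$ would destroy the result for $p<2$: then $p'>2$ and $p'-\bigl(\frac1p+\frac{p'}{p}\bigr)=\frac1{p'}>0$. The route you suggest, dualizing the weak-type bound for $\mathcal{H}^2$ on $L^{p'}(\sigma)$, also fails, because weak-type bounds do not dualize into weak-type bounds for the adjoint.

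The exponent $1+\frac1p$ that this bookkeeping produces is in fact impossible. In the $[w]_{A_p}^2$ example behind Theorem~\ref{thm:LogLower}, the input lives on $(1,\infty)$ and the output is measured on $(0,1)$. There $\mathcal{H}^2|f|$ vanishes and the near-diagonal part costs at most $[w]_{A_p}$, so along those power weights one must have $\|(\mathcal{H}^*)^2\|_{L^p(w)\to L^{p,\infty}(w)}\gtrsim[w]_{A_p}^2$.

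The missing step is a one-line strong-type estimate, which is what the paper uses. By duality, $\mathcal{H}|g|\lesssim Mg$, Buckley on $L^{p'}(\sigma)$, and $[\sigma]_{A_{p'}}=[w]_{A_p}^{p'-1}$,
\begin{equation*}
\|\mathcal{H}^*\|_{L^p(w)\to L^p(w)}=\|\mathcal{H}\|_{L^{p'}(\sigma)\to L^{p'}(\sigma)}\lesssim[\sigma]_{A_{p'}}^{\frac{1}{p'-1}}=[w]_{A_p},
\end{equation*}
so $(\mathcal{H}^*)^2$ costs $[w]_{A_p}^2$ in the strong norm, hence in the weak norm.

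For the annular region you take a genuinely different route from the paper, and it is valid. Since $[b,T]=[b-c,T]$ for constants $c$, take $A_k=\{2^k\le|x|<2^{k+1}\}$ and $\tilde A_k=\{2^{k-1}\le|y|<2^{k+2}\}$. The far part at $x\in A_k$ then falls into $\{|y|<|x|/2\}\cup\{|y|>2|x|\}$. The local part is $b_kT(f\chi_{\tilde A_k})-T(b_kf\chi_{\tilde A_k})$ with $|b_k|\le 2\log 2$ on $\tilde A_k$. Because the $A_k$ are disjoint, $w(\{\sum_k h_k>\lambda\})=\sum_k w(\{h_k>\lambda\})$ for $h_k$ supported in $A_k$. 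So the sharp bound $\|T\|_{L^p(w)\to L^{p,\infty}(w)}\lesssim[w]_{A_p}$ and the bounded overlap of the $\tilde A_k$ give total cost $[w]_{A_p}$, below the target.

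Your claim that this piece is not pointwise controlled by $M$ is mistaken, though. The paper uses $\bigl|\log\frac{|x|}{|y|}\bigr|\le\frac{2|x-y|}{|x|}$ for $\frac{|x|}{2}\le|y|\le 2|x|$ (mean value theorem), which removes one order of the singularity. This bounds the piece by $|x|^{-1}\int_{|x-y|<3|x|}|f(y)||x-y|^{1-n}\,dy\lesssim Mf(x)$ via dyadic shells around $x$.

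The trade-off is as follows. The paper's route is elementary, needs only Buckley's bounds, and yields the clean pointwise Proposition~\ref{prop:CommutatorHardyBound}. However, it needs the kernel representation of $[b,T]f(x)$ also for $x\in\text{supp}\,f$, which the paper obtains from the local Lipschitz regularity of $\log|\cdot|$ and $L^2$ approximation. Your localization uses the kernel only off the support of the input, at the price of importing the deep sharp weak-type (or $A_2$) theorem for $T$.
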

\noindent Theorem \ref{thm:LogUpper} relies on a pointwise bound by a sum of the Hardy--Littlewood maximal operator and the squares of the radial Hardy operator and its adjoint; see Proposition~\ref{prop:CommutatorHardyBound} below. \looseness=-1 

While Theorems \ref{thm:LogLower} and \ref{thm:LogUpper} show that the dependence $[w]_{A_p}^{\max(2,\frac{1}{p}+\frac{p'}{p})}$ is sharp in the case of the logarithmic symbol, they do not answer our question for general $\text{BMO}(\mathbb{R}^n)$ symbols. Our next result shows that the estimate in \eqref{eq:GeneralUpper} is sharp among all $\text{BMO}(\mathbb{R}^n)$ symbols.\looseness=-1
\begin{thm}\label{thm:GeneralLower}
There exists $b \in \textrm{\normalfont{BMO}}(\mathbb{R})$ such that for any $p \in (1,\infty)$, it holds that 
$$
	\varphi_{b,H}(t) \approx t^{\max(2,p')}.
$$ 
\end{thm}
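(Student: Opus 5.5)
The upper bound $\varphi_{b,H}(t)\lesssim t^{\max(2,p')}$ holds for every $b\in\text{BMO}(\mathbb{R})$ by \eqref{eq:GeneralUpper}, so only the lower bound needs proof. For $p\ge 2$ we have $\max(2,p')=2$, and the exponent-$2$ lower bound can be obtained with $b=\log|\cdot|$, as in Theorem \ref{thm:LogLower}. The main case is therefore $1<p<2$, where we must reach the exponent $p'$. Since $p'>\frac1p+\frac{p'}{p}$ there, the symbol $\log|x|$ is not enough, and $b$ must be chosen differently. The new lower bound should come from a general principle for singular integrals. If $b$ oscillates on dyadic scales near the origin, say
\[
b(x)=\sum_k \epsilon_k\,\chi_{[2^{-k-1},2^{-k})}(|x|)
\]
(or a smoothed lacunary version), then $[b,H]$ behaves like $H$ composed with a localization. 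A single $b$ must serve all $p$ at once, so I would fix one such $b$ in advance, for example $b(x)=\log|x|$ composed with a sawtooth, or $b=\sum_k \chi_{(2^{-2k-1},2^{-2k})}$, which is bounded and hence in BMO.

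The model computation is to take a power weight $w(x)=|x|^{(1-\delta)(p-1)}$, which has $[w]_{A_p}\approx\delta^{1-p}$, and $f=|x|^{\delta-1}$ restricted to a set $E$ on which $b$ takes one value. Then $[b,H]f(x)=(b(x)-c)Hf(x)$, and for $x$ in a set $F$ where $b-c\approx 1$ we get $|[b,H]f(x)|\gtrsim |Hf(x)|$. The gain over the logarithmic symbol is this: with $b=\log|x|$ the factor $\log(x/y)$ vanishes near the diagonal, whereas a step symbol jumps and keeps a full factor of size $1$. I would arrange $E$ and $F$ to alternate across dyadic shells so that each point of $F$ sees $f$ on $E$ at all smaller scales. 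This gives $|Hf(x)|\gtrsim \int_0^{x} y^{\delta-1}\,dy\cdot x^{-1}\approx \delta^{-1}x^{\delta-1}$, and the resulting weak-norm lower bound should be $\delta^{-1}\cdot\delta^{-1/p}$ relative to $\|f\|_{L^p(w)}=\delta^{-1/p}$.

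This yields only $[w]^{1/(p-1)\cdot\ldots}$, so it must be calibrated to reach $p'$. Writing $[w]\approx\delta^{1-p}$, we have $t^{p'}=\delta^{-p}$. Matching this requires a ratio $\|[b,H]f\|_{L^{p,\infty}(w)}/\|f\|_{L^p(w)}\approx\delta^{-p}$, which is the known sharp weak-type bound of $H$, namely $t$, multiplied by an extra $t^{p'-1}=t^{1/(p-1)}=\delta^{-1}$. That extra factor of $\delta^{-1}$ per iteration is exactly what the commutator structure provides. I would therefore use the general lower-bound method for singular integrals promised in the abstract: show that $|[b,H]f|$ dominates a composition $H^*\circ H$ type operator (the square of the Hardy operator) on suitable test functions, and then use the known extremizers for weak bounds of the Hardy operator and its adjoint. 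Because $b$ is a step function with many jumps, the commutator kernel $(b(x)-b(y))/(x-y)$ is of size $|x-y|^{-1}$ on a positive-proportion set of pairs at every scale, and the lower estimate should be obtained scale by scale.

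The main obstacle is designing one fixed $b$ and the test pair $(w,f)$ for which the near-diagonal cancellation of $b(x)-b(y)$ does not destroy the second logarithmic factor. Concretely, the task is to prove that for every $x$ in a set of large $w$-measure, $\int |b(x)-b(y)|\,|x-y|^{-1}f(y)\,dy$ is $\gtrsim\delta^{-2}x^{\delta-1}$, with signs that do not cancel, and at the same time that the level set has measure comparable to the $L^{p,\infty}$-extremal configuration with $t^{p'}$. My first attempt would be a lacunary symbol $b(x)=\sum_j\chi_{(2^{-2^{j+1}},2^{-2^j})}(|x|)$, whose sparse jumps behave like $\log\log$ scales. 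I would then check that the positivity of the kernel on $(0,\infty)$ (where $x,y>0$ and the sign of $b(x)-b(y)$ is controlled) gives the non-cancellation.
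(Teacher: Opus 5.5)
There is a genuine gap, and it is in the choice of symbol. Every symbol you propose for $1<p<2$ is bounded: the signed dyadic step function, $\log|x|$ composed with a sawtooth, $\sum_k\chi_{(2^{-2k-1},2^{-2k})}$, and $\sum_j\chi_{(2^{-2^{j+1}},2^{-2^j})}(|x|)$. A bounded symbol can never produce more than linear growth in $[w]_{A_p}$. Indeed, write $[b,H]f=bHf-H(bf)$ and use the quasi-triangle inequality in $L^{p,\infty}(w)$. Then $\|bHf\|_{L^{p,\infty}(w)}\le\|b\|_{L^\infty}\|Hf\|_{L^{p,\infty}(w)}$, and the sharp weak-type bound for $H$ recalled in Remark~\ref{rem:1.5} gives $\|H(bf)\|_{L^{p,\infty}(w)}\lesssim[w]_{A_p}\|b\|_{L^\infty}\|f\|_{L^p(w)}$. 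Together,
\[
\|[b,H]\|_{L^p(w)\to L^{p,\infty}(w)}\lesssim \|b\|_{L^\infty(\mathbb{R})}\,[w]_{A_p},
\]
so $\varphi_{b,H}(t)\lesssim\|b\|_{L^\infty(\mathbb{R})}\,t$. This rules out both $t^{p'}$ and $t^2$.

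Your own model computation shows why. If $f$ lives where $b\equiv c$, then $[b,H]f=(b-c)Hf$, and a jump of size $1$ only reproduces a lower bound for $H$ itself. The heuristic that a jump ``keeps a full factor of size $1$'' is exactly what caps you at $[w]_{A_p}$. The later claims, that ``the commutator structure provides'' an extra $\delta^{-1}$ and that $|[b,H]f|$ dominates a square of Hardy-type operators, are asserted without a mechanism and cannot yield the required growth for bounded $b$. Separately, using $\log|\cdot|$ for $p\ge 2$ and another symbol for $p<2$ does not give the single $b$ the statement requires. You would have to add the pieces and localize them so that each test configuration sees only one of them, and you do not address this.

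The missing idea is that the extra factor $[w]_{A_p}^{p'-1}=[w]_{A_p}^{1/(p-1)}$ must come from the \emph{unboundedness} of $b$. That unboundedness has to sit exactly where $\sigma$ is singular, and it has to be repeated over many scales with a weight whose characteristic does not compound.

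\begin{itemize}
\item \textbf{Symbol.} The paper takes $b=\log^+\frac{1}{|x+3|}+\sum_{k\ge1}\log^+\bigl(\frac{r_{k+3}}{|x-r_k|}\bigr)$ with $r_k=2^{-k}$ (Lemma~\ref{lemLB4}).
\item \textbf{Weight.} Instead of a single power weight, it uses the lacunary weight $w_{p,m}$ with $\mu=m^{-1/(p-1)}$. This weight equals $|x|^{1/m-1}$ away from the $r_k$, and equals $r_k^{1/m-1}\bigl(\frac{|x-r_k|}{\mu r_k}\bigr)^{(p-1)(1-\mu)}$ near them. The key fact (Lemma~\ref{lemLB2}) is that still $[w_{p,m}]_{A_p}\approx m$.
\item \textbf{Test functions.} Theorem~\ref{thm:SepVar} is applied with $\phi=\chi_E$, where $E$ spans $m$ dyadic scales on which $b=0$, and $\psi=\sum_{k=1}^m\frac{b}{\pi r_k}\chi_{J_k}$. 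This gives $\|\chi_E\|_{L^{p,\infty}(w)}\gtrsim m^{1/p}$.
\item \textbf{Where the extra factor appears.} On each $J_k$ the logarithmic singularity of $b$ meets the singularity $|y-r_k|^{\mu-1}$ of $\sigma$, and
\[
\int_0^1\Bigl(\log\frac1s\Bigr)^{p'}\mu s^{\mu-1}\,ds=\Gamma(p'+1)\,\mu^{-p'}.
\]
Hence $\|\psi\|_{L^{p'}(\sigma)}\gtrsim\mu^{-1}m^{1/p'}$.
\item \textbf{Conclusion.} The product is $m^{1+\frac{1}{p-1}}=m^{p'}\approx[w_{p,m}]_{A_p}^{p'}$.
\item \textbf{Exponent $2$.} This comes from the separate piece $\log^+\frac{1}{|x+3|}$ tested against $|x+3|^{\alpha-1}$. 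Since $b=0$ on $(-\infty,-4)$ and $b=-\log|x+3|$ on $(-4,-3)$, this case reduces to Proposition~\ref{prop:thm1.1.1}.
\end{itemize}
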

\noindent The standard logarithmic symbol and power weights are insufficient to establish Theorem \ref{thm:GeneralLower}, as Theorem \ref{thm:LogUpper} shows. Instead, we employ a family of lacunary power weights and a symbol constructed from corresponding localized logarithms; see Section \ref{section:GeneralLoer} below. 

The lower bounds in Theorem \ref{thm:LogLower} and Theorem \ref{thm:GeneralLower} are consequences of the following convenient general result, which gives lower bounds for integral operators in terms of the kernel's behavior. Below, we say that $\|\cdot\|_X\colon L^0(\mathbb{R}^n) \rightarrow [0,\infty]$ is a monotone functional if $\|f\|_X \ge \|g\|_X$ whenever $|f| \ge |g|$ almost everywhere and if $\|af\|_X=a\|f\|_X$ whenever $a\geq 0$, where we adopt the convention $0\cdot\infty=0$. We write $\sigma := w^{1-p'}$ for $w \in A_p$. 
\begin{thm}
\label{thm:SepVar}
Let $T$ be given for compactly supported $f \in \bigcup_{q \in (1,\infty)}L^q(\mathbb{R}^n)$ by 
$$
    Tf(x)=\int_{\mathbb{R}^n} K(x,y)f(y)\, dy
$$
for almost every $x\notin \supp\, f$, where $K\colon \mathbb{R}^{2n}\setminus\{(x,x)\colon x \in \mathbb{R}^n\} \rightarrow \mathbb{C}$. If there exist $s,\phi,\psi \in L^0(\mathbb{R}^n)$ such that  $|s|=1$, $\phi$ and $\psi$ are nonnegative, $\text{supp}\,\phi \cap \text{supp}\,\psi$ is a null set, and \looseness=-1
\begin{align}\label{eq:KernelLower}
    s(y)K(x,y)\geq \phi(x)\psi(y)
\end{align}
for almost every $x \in \text{supp}\,\phi$ and $y \in \text{supp}\,\psi$, then
$$
        \lVert T\rVert_{L^p(w)\to X}\geq \rVert \phi\rVert_{X}\lVert \psi \rVert_{L^{p^\prime}(\sigma)}
$$
for any $p\in(1,\infty)$, any $w\in A_p$, and any monotone functional $\lVert \cdot\rVert_X\colon L^0(\mathbb{R}^n)\rightarrow [0,\infty]$.
\end{thm}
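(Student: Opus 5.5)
The plan is to test $T$ against the extremal function for the duality $L^{p}(w)$--$L^{p'}(\sigma)$, localized to the support of $\psi$. Set $g := \bar{s}\,\psi\,\sigma$, where $\sigma=w^{1-p'}$. By H\"older's inequality, $\|\psi\|_{L^{p'}(\sigma)}$ is the supremum of $\int \psi |h|$ over $\|h\|_{L^p(w)}\le 1$. The extremizer is $h=\psi^{p'-1}\sigma/\|\psi\|_{L^{p'}(\sigma)}^{p'-1}$, since $\int (\psi^{p'-1}\sigma)^p w = \int \psi^{p'}\sigma$ (because $\sigma^p w = \sigma$). Accordingly, the test function is
$$
f := \bar{s}\,\psi^{p'-1}\sigma \chi_E ,
$$
where $E$ is a bounded subset of $\supp \psi$ chosen so that $f$ lies in the domain of $T$: compactly supported and in some $L^q$. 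A natural choice is $E=\{|x|\le R,\ \psi\le R,\ \sigma\le R, \sigma\ge 1/R\}\cap \supp\psi$, so that $f$ is bounded with compact support. Then
$$
\|f\|_{L^p(w)}^p = \int_E \psi^{p'}\sigma \qquad\text{and}\qquad \|f\|_{L^p(w)} = \Big(\int_E\psi^{p'}\sigma\Big)^{1/p}.
$$

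Next, for a.e.\ $x\in\supp\phi$, we have $x\notin \supp f$ up to a null set, since the supports of $\phi$ and $\psi$ meet only in a null set. (Care is needed here: ``$\supp$'' for $f$ should be understood in the essential sense, or $E$ should be shrunk slightly to keep it at positive distance from a.e.\ such $x$. I would just use the hypothesis as stated, namely the kernel representation for a.e.\ $x\notin\supp f$.) Hence
$$
|Tf(x)| \ge \operatorname{Re} \int K(x,y)\bar{s}(y)\psi^{p'-1}(y)\sigma(y)\chi_E(y)\,dy.
$$
Writing $K\bar s = \overline{s}K$ and using $|s|=1$, the hypothesis~\eqref{eq:KernelLower} should read so that $s(y)K(x,y)$ is real and at least $\phi(x)\psi(y)$. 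With the conjugation convention adjusted, i.e.\ using $f = s\,\psi^{p'-1}\sigma\chi_E$, the integrand is real and at least $\phi(x)\psi(y)^{p'}\sigma(y)\chi_E(y)$. Therefore
$$
|Tf(x)| \ge \phi(x)\int_E \psi^{p'}\sigma
$$
for a.e.\ $x \in \supp\phi$, and trivially $|Tf|\ge 0=\phi\cdot(\cdots)$ off $\supp\phi$.

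By monotonicity and homogeneity of $\|\cdot\|_X$,
$$
\|T\|_{L^p(w)\to X}\,\|f\|_{L^p(w)} \ge \|Tf\|_X \ge \|\phi\|_X \int_E\psi^{p'}\sigma .
$$
Dividing gives
$$
\|T\|_{L^p(w)\to X} \ge \|\phi\|_X \Big(\int_E \psi^{p'}\sigma\Big)^{1/p'}.
$$
Letting $R\to\infty$ and applying monotone convergence (using $0<\sigma<\infty$ a.e.\ for $w\in A_p$) yields the claim. Two degenerate cases remain. If $\int_E\psi^{p'}\sigma=0$ for all $R$, then $\psi=0$ a.e.\ and the right side is $0$ by the convention $0\cdot\infty=0$. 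If $\|\phi\|_X=\infty$ with $\psi\neq0$, the same inequality forces the operator norm to be infinite.

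I expect the main obstacle to be purely technical: ensuring that the test function is admissible, i.e.\ compactly supported and in $\bigcup_q L^q$, and that the kernel formula applies on $\supp\phi$. The truncation set $E$ handles the first point, and the null intersection of supports handles the second.
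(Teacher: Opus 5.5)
Your argument is correct and is essentially the paper's proof. Once you settle on $s$ rather than $\bar s$, you test $T$ on the truncated H\"older extremizer $s\,\psi^{p'-1}\sigma\chi_E$. You then use the kernel bound off $\supp\psi$ to get $|Tf|\ge \phi\int_E\psi^{p'}\sigma$, divide by $\|f\|_{L^p(w)}=(\int_E\psi^{p'}\sigma)^{1/p}$, and pass to the limit by monotone convergence.

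The one real difference is how you make the test function admissible. You also truncate $\sigma$, so the test function is bounded with compact support. The paper truncates only $\psi$ (via $\min(\psi,N)$) and invokes the reverse H\"older inequality for $\sigma\in A_{p'}$ to place $f_E$ in some $L^q$. Your route is slightly more elementary and uses only that $w$ is a weight, not that $w\in A_p$.
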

\noindent In \eqref{eq:KernelLower}, we implicitly assume that the left-hand side is real for $x \in \text{supp}\,\phi$ and $y \in \text{supp}\,\psi$. 

\begin{rem}\label{rem:1.5}
Theorem \ref{thm:SepVar} applies to CZOs, implying the sharpness of the strong-type and weak-type bounds in the estimates 
$$
    \|T\|_{L^p(w)\rightarrow L^p(w)} \lesssim [w]_{A_p}^{\max\big(1,\frac{p'}{p}\big)} \quad\text{and}\quad \|T\|_{L^p(w)\rightarrow L^{p,\infty}(w)}\lesssim [w]_{A_p}
$$
of \cites{H2012,HLMORSU2012}. Indeed, the lower bounds for the Hilbert transform can be established as follows:
 If $s=-1$, $\phi(x)=\chi_{(0,1)}(x)$, $\psi(y)=\frac{1}{\pi y}\chi_{(1,\infty)}(y)$, and $w(x)=|x|^{\alpha-1}$ for $\alpha\in(0,1)$, then
    \begin{equation*}
        \|\phi\|_{L^p(w)}=\|\phi\|_{L^{p,\infty}(w)}\approx [w]_{A_p}^\frac{1}{p} \quad\text{and}\quad \|\psi\|_{L^{p^\prime}(\sigma)}\approx [w]_{A_p}^{\frac{1}{p^\prime}};
    \end{equation*}
if $s=1$, $\phi(x)=\frac{1}{\pi x}\chi_{(1,\infty)}(x)$, $\psi(y)=\chi_{(0,1)}(y)$, and $w(x)=|x|^{(p-1)(1-\delta)}$ for $\delta\in(0,1)$, then
    \begin{equation*}
        \|\phi\|_{L^p(w)}\approx [w]_{A_p}^{\frac{p'-1}{p}} \quad\text{and}\quad \|\psi\|_{L^{p^\prime}(\sigma)}\approx [w]_{A_p}^{\frac{1}{p}}.
    \end{equation*}
Additionally, Theorem \ref{thm:SepVar} recovers the optimality of the constant in the strong-type bound for commutators of \eqref{eq:SharpStrong}. Note that if $b \in \text{BMO}(\mathbb{R}^n)$ and $T$ is a CZO with kernel $K$, then $[b,T]$ can be expressed as in Theorem \ref{thm:SepVar} with kernel $K_b(x,y) := (b(x)-b(y))K(x,y)$. The lower bound for $\|[\log|\cdot|,H]\|_{L^p(w)\rightarrow L^p(w)}$ follows from Theorem \ref{thm:SepVar} as follows: If $s=1$, $\phi(x)=\chi_{(0,1)}(x)$, $\psi(y)=\frac{\log|y|}{\pi y}\chi_{(1,\infty)}(y)$, and $w(x)=|x|^{\alpha-1}$ with $\alpha\in(0,1)$, then 
    \begin{equation*}
        \lVert \phi\rVert_{L^p(w)}\approx [w]_{A_p}^\frac{1}{p} \quad \text{and}\quad \lVert \psi\rVert_{L^{p^\prime}(\sigma)}\gtrsim [w]_{A_p}^{1+\frac{1}{p^\prime}};
    \end{equation*}
if $s=1$, $\phi(x)=\frac{1}{\pi x}\chi_{(1,\infty)}(x)$, $\psi(y)=-\log|y|\chi_{(0,1)}(y)$, and $w(x)=|x|^{(p-1)(1-\delta)}$ with $\delta\in(0,1)$, then
    \begin{equation*}
        \lVert \phi\rVert_{L^p(w)}\approx [w]_{A_p}^\frac{p'-1}{p} \quad \text{and}\quad \lVert \psi\rVert_{L^{p^\prime}(\sigma)}\approx [w]_{A_p}^{\frac{p'+1}{p}}.
    \end{equation*}
\end{rem}

The examples of Remark \ref{rem:1.5} are instances in which there exist \mbox{$s\in\{-1,1\}$} and functions $\phi$ and $\psi$, such that the separation of variables \eqref{eq:KernelLower}
holds for $x\in \supp\,\phi$ and $\Psi:= \supp\, \psi$. If this kernel lower bound holds, then
\begin{equation}
\label{eq:sKLowBound}
    s\int_\Psi K(x,y)f(y)\,dy\geq \phi(x)\int_{\mathbb{R}^n} f(y) \psi(y)\, dy,
\end{equation}
whenever $f\geq 0$. On the other hand, Hölder's inequality gives
\begin{equation*}
    \int_{\mathbb{R}^n} f\psi\,dy\leq \lVert f\rVert_{L^p(w)}\lVert \psi\rVert_{L^{p^\prime}(\sigma)}.
\end{equation*}
We choose $f\ge 0$ such that $f^pw = \psi^{p'}\sigma$ so that equality holds above -- Theorem~\ref{thm:SepVar} follows from this optimal choice of $f$. 

The paper is organized as follows. In Section \ref{sec:Preliminaries}, we collect notation and preliminaries regarding CZOs, $\text{BMO}(\mathbb{R}^n)$, $A_p$ weights, and strong/weak Lebesgue spaces. In Section \ref{section:LogUpper}, we show that commutators of CZOs and the logarithmic symbol are dominated by the maximal operator plus the squares of the radial Hardy operator and its adjoint, and we prove Theorem \ref{thm:LogUpper}. In Section \ref{section:thm:LogLower}, we prove the general result of Theorem \ref{thm:SepVar} and apply it to establish Theorems \ref{thm:LogLower} and \ref{thm:GeneralLower}.


\section{Preliminaries}\label{sec:Preliminaries}

For $A,B>0$, we write $A \lesssim B$ if there exists $C>0$ (possibly depending on the underlying parameters $n$, $p$, $b$, and $T$, but not on weights $w$) such that $A \leq CB$, and we write $A \approx B$ if $A \lesssim B \lesssim A$. We denote the space of measurable functions on $\mathbb{R}^n$ by $L^0(\mathbb{R}^n)$. 

We consider linear singular integral operators $T$ given by 
\begin{align}\label{eq:SIOIntegral}
    Tf(x)= \int_{\mathbb{R}^n}K(x,y)f(y)\,dy
\end{align}
for $f \in C_c^{\infty}(\mathbb{R}^n)$ and almost every $x\not\in\text{supp}\,f$, where $K\colon \mathbb{R}^n\times\mathbb{R}^n\setminus \{(x,x)\colon x\in\mathbb{R}^n\}\rightarrow \mathbb{C}$ satisfies \looseness=-1
\begin{align*}
|K(x,y)|\lesssim \frac{1}{|x-y|^n}
\end{align*}
whenever $x \neq y$ and there exists $\delta>0$ such that 
\begin{align*}
    |K(x,y)-K(x,y')|, \,|K(y,x)-K(y',x)|\lesssim \frac{|y-y'|^{\delta}}{|x-y|^{n+\delta}}
\end{align*}
whenever $|y-y'|\leq \frac{1}{2}|x-y|$. A singular integral operator $T$ is a Calder\'on--Zygmund operator (CZO) if it extends boundedly on $L^2(\mathbb{R}^n)$. We denote the collection of all CZOs by $\text{CZO}(\mathbb{R}^n)$. A fact that will be useful for us is that the integral representation for CZOs given in \eqref{eq:SIOIntegral} holds for any $f \in \bigcup_{p \in [1,\infty)}L^p(\mathbb{R}^n)$ and $x \not \in \text{supp}\,f$; see \cite{GrafakosModern}*{Proposition 4.2.3}. Recall that the Hilbert transform is a Calder\'on--Zygmund operator with integral formula
$$
    Hf(x) = \frac{1}{\pi}\,\text{p.v.}\int_{\mathbb{R}}\frac{f(y)}{x-y}\,dy.
$$

Let $\text{BMO}(\mathbb{R}^n)$ denote the John--Nirenberg space of bounded mean oscillation, consisting of all $b \in L_{\text{loc}}^1(\mathbb{R}^n)$ such that
$$
    \|b\|_{\text{BMO}(\mathbb{R}^n)} := \sup_Q \langle |b-\langle b\rangle_Q|\rangle_Q <\infty,
$$
where the supremum is taken over all cubes $Q \subseteq \mathbb{R}^n$ and $\langle f \rangle_Q := \frac{1}{|Q|}\int_Q f(x)\,dx$. While it is clear that $L^{\infty}(\mathbb{R}^n) \subseteq \text{BMO}(\mathbb{R}^n)$, the function $b(x) = \log|x|$ is a canonical unbounded element of $\text{BMO}(\mathbb{R}^n)$. A fundamental property of BMO functions is the John--Nirenberg theorem, which implies that $|b|^q\in L^1_{\textrm{loc}}(\mathbb{R}^n)$ for every $q\in(0,\infty)$; see \cite{GrafakosModern}*{Corollary 3.1.8}. 

A weight is a locally integrable function that is positive almost everywhere. Given a weight $w$ and $p\in [1,\infty)$, we define $L^p(w)$ and $L^{p,\infty}(w)$ to be spaces of all $f \in L^0(\mathbb{R}^n)$ such that 
$$
    \|f\|_{L^p(w)}:=\Big(\int_{\mathbb{R}^n} |f|^pw\,dx\Big)^{1/p} \quad\text{and}\quad \|f\|_{L^{p,\infty}(w)}:=\sup_{\lambda>0}\lambda w(\{x \in \mathbb{R}^n\colon |f(x)|>\lambda\})^{1/p}
$$
are finite, respectively, where $w(A):= \int_A w\,dx$. Note that $\|\cdot\|_{L^p(w)}$ and $\|\cdot\|_{L^{p,\infty}(w)}$ are monotone functionals (in fact, $\|\cdot\|_{L^{p,\infty}(w)}$ is a quasi-norm and $\|\cdot\|_{L^p(w)}$ is a norm). For an operator $T$ and a monotone functional $X$, we denote the infimum of all $C>0$ such that 
$$
    \|Tf\|_{X} \leq C \|f\|_{L^p(w)}
$$
for all $f \in L^p(w)$ by $\|T\|_{L^p(w)\rightarrow X}$, if one exists, and otherwise $\|T\|_{L^p(w)\rightarrow X}=\infty$. 

We say that a weight $w$ is in $A_p$ for $p \in (1,\infty)$ if 
$$
    [w]_{A_p} := \sup_Q \langle w\rangle_Q\langle w^{1-p'}\rangle_Q^{p-1} < \infty. 
$$
The $A_p$ condition is well known to be sufficient for the strong-type $(p,p)$ bounds for CZOs and generally necessary for the weak-type $(p,p)$ bounds; see \cites{CF1974, HMW1973}. Typical examples of $A_p$ weights include power weights -- recall that $w(x) = |x|^{n(\alpha-1)}=|x|^{n(p-1)(1-\delta)}$ is an $A_p$ weight if and only if $0< \alpha <p$ or, equivalently, if and only if $0<\delta<p'$, and, in this case, one has 
$$
    [w]_{A_p} \approx \frac{1}{\alpha}\frac{1}{\delta^{p-1}}.
$$
Recall also that $w \in A_p$ if and only if $\sigma:=w^{1-p'} \in A_{p'}$ and, in this case, $[w]_{A_p}^{p'-1} = [\sigma]_{A_{p'}}$. We will also utilize the reverse-H\"older property of $A_p$ weights, which states that for every $w\in A_p$, there exists $r >1$ such that for every cube $Q \subseteq \mathbb{R}^n$, 
\begin{equation*}
    \langle w^r\rangle_Q^{1/r} \lesssim \langle w\rangle_Q.
\end{equation*}

The Hardy--Littlewood maximal operator $M$ is given for $f \in L^1_{\text{loc}}(\mathbb{R}^n)$ and $x \in \mathbb{R}^n$ by
$$
    Mf(x) := \sup_{r>0}\,\langle |f|\rangle_{B(x,r)}.
$$
In \cite{M1972}, Muckenhoupt showed that the $A_p$ condition also characterizes the weighted bounds for $M$. We will use the following sharp bounds proved by Buckley in \cite{B1993}: if $p \in (1,\infty)$, then 
\begin{align}\label{HLSharpBounds}
\|M\|_{L^p(w) \rightarrow L^p(w) } \lesssim [w]_{A_p}^{\frac{1}{p-1}} \quad\text{and} \quad \|M\|_{L^p(w) \rightarrow L^{p,\infty}(w)} \lesssim [w]_{A_p}^{\frac{1}{p}}
\end{align}
for all $w \in A_p$.


\section{Upper bound for the logarithmic symbol}\label{section:LogUpper}

In this section, we establish the sharp weighted weak-type $(p,p)$ bound for the commutator of a CZO with the logarithmic symbol, Theorem \ref{thm:LogUpper}. We begin by introducing the radial Hardy operator, $P$.  Letting $\omega_n$ be the volume of the unit ball in $\R^n$, we define
\[ 
    Pf(x) := \frac{1}{\omega_n|x|^n} \int_{|y|<|x|}f(y)\,dy
\]
for $f \in L^1_{\text{loc}}(\mathbb{R}^n)$ and $x \neq 0$. 
Applying Tonelli's theorem, the $L^2(\mathbb{R}^n)$ adjoint is given by
\[ P^*f(x) := \frac{1}{\omega_n} \int_{|y|>|x|} \frac{f(y)}{|y|^n}\,dy. \]
These operators are radial analogs of the classical Hardy operator introduced in \cite{MR1544414} that have been studied further in many important works such as \cites{MR0425598, MR1239796}. More recently, Sukochev, Yang, Zanin, and Zhou established a distributional relationship between the commutators of CZOs with $\text{BMO}(\mathbb{R}^n)$ symbols and the sum of squares of the Hardy operator and its adjoint in \cite{MR5008123}. \looseness=-1

To prove Theorem \ref{thm:LogUpper}, we establish the following pointwise upper bound for the commutator of a CZO with $\log|\cdot|$ in terms of the Hardy--Littlewood maximal operator and the sum of squares of the Hardy operator and its adjoint. One can compare our following pointwise bound with the distributional estimate of \cite{MR5008123}. 

\begin{prop}\label{prop:CommutatorHardyBound}
If $b(x) = \log |x|$ and $T \in \text{CZO}(\mathbb{R}^n)$, then 
\begin{equation*}
    |[b,T]f(x)|\lesssim Mf(x)+P^2|f|(x)+(P^*)^2|f|(x)
\end{equation*}
for every $f \in C_c^{\infty}(\mathbb{R}^n)$ and almost every $x \in \mathbb{R}^n$.
\end{prop}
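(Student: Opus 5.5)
We fix $x\neq 0$ and split $\mathbb{R}^n$ into three regions relative to $|x|$: a near region $E_1=\{y:\frac12|x|\le |y|\le 2|x|\}$, an inner region $E_2=\{|y|<\frac12|x|\}$, and an outer region $E_3=\{|y|>2|x|\}$. We write $f=f_1+f_2+f_3$ with $f_j=f\chi_{E_j}$. On $E_2\cup E_3$ the point $x$ lies outside the support, so the kernel representation $[b,T]f_j(x)=\int (\log|x|-\log|y|)K(x,y)f_j(y)\,dy$ is available. (Strictly, $x$ may lie on the boundary of the support, but that is a null issue, or it can be handled by shrinking the regions slightly.)

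\emph{Inner region.} Here $|x-y|\approx|x|$, so $|K(x,y)|\lesssim|x|^{-n}$. Hence
$$
|[b,T]f_2(x)|\lesssim \frac{1}{|x|^n}\int_{|y|<|x|}\log\tfrac{|x|}{|y|}\,|f(y)|\,dy .
$$
The key identity is that, by Tonelli,
$$
P^2|f|(x)=\frac{1}{\omega_n|x|^n}\int_{|z|<|x|}\frac{1}{\omega_n|z|^n}\int_{|y|<|z|}|f(y)|\,dy\,dz=\frac{1}{\omega_n|x|^n}\int_{|y|<|x|}|f(y)|\Big(\int_{|y|<|z|<|x|}\frac{dz}{\omega_n|z|^n}\Big)dy,
$$
and the inner integral equals $n\log\frac{|x|}{|y|}$ in polar coordinates. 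So the inner term is $\lesssim P^2|f|(x)$.

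\emph{Outer region.} Here $|x-y|\approx|y|$, so $|K(x,y)|\lesssim|y|^{-n}$ and the term is bounded by $\int_{|y|>|x|}\log\frac{|y|}{|x|}\,|f(y)|\,|y|^{-n}\,dy$. The same Tonelli computation gives
$$
(P^*)^2|f|(x)=\frac{1}{\omega_n^2}\int_{|y|>|x|}\frac{|f(y)|}{|y|^n}\int_{|x|<|z|<|y|}\frac{dz}{|z|^n}\,dy=\frac{n}{\omega_n}\int_{|y|>|x|}\log\tfrac{|y|}{|x|}\,\frac{|f(y)|}{|y|^n}\,dy,
$$
which controls the outer term.

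\emph{Near region.} This is the main obstacle, since $T$ is singular there. I would subtract the constant $c=\log|x|$ and write $[b,T]f_1=[b-c,T]f_1$. Localize further with $f_1=f_1\chi_{B(x,|x|/4)}+f_1\chi_{E_1\setminus B(x,|x|/4)}$.

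For the far part $f_1\chi_{E_1\setminus B(x,|x|/4)}$, the kernel representation applies and $|b(y)-c|\le\log 2$. The part $(b(x)-c)Tg$ vanishes, and the remaining term $T((b-c)g)(x)$ is bounded by $\int_{E_1\setminus B}|K||f|\lesssim |x|^{-n}\int_{|y|\le 2|x|}|f|\lesssim Mf(x)$.

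For the local piece $g=f\chi_{B(x,|x|/4)}$, note $(b(x)-c)Tg(x)=0$, so we need a pointwise bound on $|T((b-c)g)(x)|$ with $h:=(b-c)g$. On $B(x,|x|/4)$ the function $b-c$ is smooth with $|b(y)-c|\lesssim|x-y|/|x|$ and has bounded gradient of size $|x|^{-1}$. Since $f\in C_c^\infty$, $T h(x)$ makes sense as a principal value or limit. I would argue with the standard truncation $T h(x)=\lim_{\epsilon\to0}\int_{|x-y|>\epsilon}K(x,y)h(y)\,dy + (\text{bounded-multiplier term})$, which is valid for CZOs at almost every $x$ up to an $L^\infty$ multiplier $a(x)h(x)$, and $h(x)=0$. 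Then
$$
|Th(x)|\lesssim\int_{|x-y|<|x|/4}\frac{|x-y|}{|x|}\frac{|f(y)|}{|x-y|^n}dy\lesssim Mf(x),
$$
by summing over dyadic annuli $|x-y|\approx 2^{-k}|x|$, which gives the geometric factor $2^{-k}$.

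The delicate point is justifying this kernel/truncation representation for general CZOs. If it is problematic, I would instead use the smoothness of $b-c$ to write $[b,T]g(x)$ through the known identity for commutators with Lipschitz symbols; failing that, I would replace it by a Cotlar-type/sparse estimate in the almost-everywhere sense. Summing the three regions gives the proposition.
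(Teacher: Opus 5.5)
Your three-region decomposition and all of your estimates agree with the paper's proof. The inner and outer pieces are bounded by $P^2|f|$ and $(P^*)^2|f|$ through the same Tonelli/polar-coordinate identities. The middle annulus is bounded by $Mf$ through $|\log(|x|/|y|)|\lesssim |x-y|/|x|$ and dyadic annuli. Your extra split at $B(x,|x|/4)$ is harmless; the paper simply uses $|\log(|x|/|y|)|\le 2|x-y|/|x|$ on all of $\frac12|x|\le|y|\le 2|x|$.

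The step to repair is the near-diagonal justification. It is not true that $Th(x)=\lim_{\varepsilon\to0}\int_{|x-y|>\varepsilon}K(x,y)h(y)\,dy+a(x)h(x)$ almost everywhere for a general CZO, because the truncations need not converge. For the convolution kernel $|x-y|^{-n-i\gamma}$ with real $\gamma\neq0$, they differ from a convergent expression by a multiple of $\varepsilon^{-i\gamma}h(x)$. What is true is that $T=T_0+a$, with $a\in L^\infty$ and $T_0$ a weak $L^2$ limit of truncations along some sequence $\varepsilon_j\to0$.

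The multiplier $a$ commutes with multiplication by $b$. Moreover, $[b,T^{(\varepsilon_j)}]f(x)=\int_{|x-y|>\varepsilon_j}(b(x)-b(y))K(x,y)f(y)\,dy$ converges pointwise, with majorant $\int\frac{|\log(|x|/|y|)|}{|x-y|^n}|f(y)|\,dy\in L^2$, because $b$ is Lipschitz near each $x\neq0$. Together these give $[b,T]f(x)=\int(b(x)-b(y))K(x,y)f(y)\,dy$ for almost every $x$. This is exactly your fallback ``identity for Lipschitz symbols,'' and it is what the paper means by local Lipschitz regularity plus $L^2$ approximation.

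Once you have this single representation, split the $y$-integral into the three regions. Do not decompose $f$ into pieces $f\chi_{E_j}$ that depend on $x$ and apply $T$ (or $[b,T]$) to each. Evaluating the almost-everywhere-defined function $[b,T](f\chi_{E_j})$ at the very point $x$ that determines $E_j$ is not meaningful as written.
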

\begin{proof}
    We first note that an application of Tonelli's theorem and switching to polar coordinates show that if $f \in L^0(\mathbb{R}^n)$ is nonnegative, then
    \begin{equation*}
        \begin{split}
            P^2f(x) ={}& \frac{n}{\omega_n |x|^n} \int_{|y|<|x|} \log \left( \frac{|x|}{|y|} \right)f(y)\,dy,\\
            (P^*)^2f(x) ={}& \frac{n}{\omega_n} \int_{|y|>|x|} \log \left( \frac{|y|}{|x|}\right) \frac{f(y)}{|y|^n}\,dy.
        \end{split}
    \end{equation*}
    The size condition of the kernel gives us the inequality
    \begin{align}\label{LogSizeBound}
        |[b,T]f(x)| \lesssim \int_{\R^n} \frac{\left|\log\left(\frac{|x|}{|y|}\right)\right|}{|x-y|^n}|f(y)|\,dy
    \end{align}
    for $x \notin \text{supp}\, f $. We would like to break up our domain of integration into the following three regions in order to reach our desired upper bound:
    \[ \{|y| < |x|/2\}, \quad \{ |x|/2 \leq |y| \leq 2|x|\}, \quad \text{and}\quad\{|y| > 2|x|\}. \]
   However, our regions may contain points in the support of $f$ where \eqref{LogSizeBound} does not directly apply.  Fortunately, due to the local Lipschitz regularity of the symbol $\log|\cdot|$ and $L^2(\mathbb{R}^n)$ approximations, the bound \eqref{LogSizeBound} applies in all three regions, and we can proceed. 
    
    For the inner region, since $|x-y| \geq |x|/2$, we have
    \begin{align*}
        \int_{|y| < |x|/2} \frac{\left|\log\left(\frac{|x|}{|y|}\right)\right|}{|x-y|^n}|f(y)|\,dy \lesssim \frac{1}{|x|^n} \int_{|y| < |x|}  \log\left( \frac{|x|}{|y|}\right) |f(y)|\,dy\lesssim P^2(|f|)(x).
    \end{align*}
    For the outer region, we similarly use $|x-y| \geq |y|/2$ to get
    \begin{align*}
        \int_{|y| > 2|x|} \frac{\left|\log\left(\frac{|x|}{|y|}\right)\right|}{|x-y|^n}|f(y)|\,dy \lesssim \int_{|y|>|x|} \log\left( \frac{|y|}{|x|} \right) \frac{|f(y)|}{|y|^n}\,dy \lesssim (P^*)^2(|f|)(x).
    \end{align*}
    For the middle region, the Mean Value Theorem and the imposed lower inequality yield
    \[ \left|\log\left(\frac{|x|}{|y|}\right)\right| \leq \frac{||x| - |y||}{\min(|x|,|y|)} \leq \frac{2|x-y|}{|x|}. \]
    The integral over this region is then bounded using the previous estimate, the imposed upper inequality on the region, and a decomposition into dyadic annuli as follows:
    \begin{align*}
        \int_{|x|/2 \leq |y| \leq 2|x|} \frac{\left|\log\left(\frac{|x|}{|y|}\right)\right|}{|x-y|^n}|f(y)|\,dy \lesssim{}& \frac{1}{|x|} \int_{|x-y| < 3|x|} \frac{|f(y)|}{|x-y|^{n-1}}\,dy\\
        ={}& \sum_{k=0}^\infty \frac{1}{|x|} \int_{3|x|2^{-(k+1)}\leq |x-y| < 3|x|2^{-k}} \frac{|f(y)|}{|x-y|^{n-1}}\,dy\\
        \lesssim{}& \sum_{k=0}^\infty \frac{2^{(k+1)(n-1)}}{(3|x|)^n} \int_{|x-y| < 3|x|2^{-k}}|f(y)|\,dy\\
        \lesssim{}& \sum_{k=0}^\infty 2^{-k}\langle |f|\rangle_{B(x,3\cdot 2^{-k}|x|)}\\
        \lesssim{}& Mf(x).
    \end{align*}
    Combining the inequalities from the above regions completes the proof.
\end{proof}

\begin{proof}[Proof of Theorem \ref{thm:LogUpper}]
    Assume by density that $f \in C_c^{\infty}(\mathbb{R}^n)$. Since the ball $B(0,|x|)$ is contained in $B(x,2|x|)$, we have that
    \begin{equation}\label{Hardy_PW_Inequality}
         |Pf(x)|\leq P|f|(x) \lesssim Mf(x).
    \end{equation}
Using \eqref{Hardy_PW_Inequality} and the bounds of \eqref{HLSharpBounds}, we have
\begin{equation}\label{SquareHardy_WeakNorm}
    \|P^2f\|_{L^{p,\infty}(w)} \lesssim [w]_{A_p}^{\frac{1}{p}}\|Mf\|_{L^p(w)}
    \lesssim [w]_{A_p}^{\frac{1}{p}+\frac{1}{p-1}} \|f\|_{L^p(w)}.
\end{equation}
For the term involving the adjoint $P^*$, we use duality, the strong-type bound of \eqref{HLSharpBounds}, and the fact that the dual weight $\sigma = w^{-\frac{1}{p-1}}$ satisfies $[\sigma]_{A_{p'}} = [w]_{A_p}^{p'-1}$ to see 
$$
    \|P^*\|_{L^p(w) \rightarrow L^p(w)} = \|P\|_{L^{p'}(\sigma) \rightarrow L^{p'}(\sigma)}
    \lesssim [\sigma]_{A_{p'}}^\frac{1}{p'-1}
    = [w]_{A_p}.
$$
We obtain the weak-norm bound for the square of the adjoint from its strong-type bound: 
\begin{equation}\label{AdjointSquareHardy_WeakNorm}
    \|(P^*)^2f\|_{L^{p,\infty}(w)} \leq \|(P^*)^2f\|_{L^p(w)} \lesssim [w]_{A_p}^2\|f\|_{L^p(w)}.
\end{equation}
Finally, Proposition \ref{prop:CommutatorHardyBound}, \eqref{SquareHardy_WeakNorm}, \eqref{AdjointSquareHardy_WeakNorm}, and the fact that $[w]_{A_p} \geq 1$ give that
\begin{align*}
    \|[b,T]f\|_{L^{p,\infty}(w)} \lesssim{}& \big([w]_{A_p}^\frac{1}{p} + [w]_{A_p}^{\frac{1}{p}+\frac{1}{p-1}} + [w]_{A_p}^2\big)\|f\|_{L^p(w)}\\
    \lesssim{}& [w]_{A_p}^{\max\big(2, \frac{1}{p}+\frac{p'}{p}\big)}\|f\|_{L^p(w)}.
\end{align*}
This concludes the proof.
\end{proof}


\section{Proofs of the lower bounds
}\label{section:thm:LogLower}

\subsection{Proof of Theorem \ref{thm:SepVar}}

We begin this section by proving the general lower bound for singular integrals whose kernels satisfy \eqref{eq:KernelLower}, Theorem \ref{thm:SepVar}. 

\begin{proof}[Proof of Theorem \ref{thm:SepVar}]
Fix $N\in \mathbb{N}$ and set $\nu=\min(\psi,N)$. Let $f=\nu^{p^{\prime}-1}\sigma$ and note that 
$$
    f\nu=f^pw=\nu^{p^\prime}\sigma.
$$ 
Let $E\subseteq \mathbb{R}^n$ be a compact set and put $f_E=f\chi_E$, $\nu_E=\nu \chi_E$. Since $\nu_E\in L^\infty(\mathbb{R}^n)$ and $\sigma \in L^1_{\textrm{loc}}(\mathbb{R}^n)$, we have 
\begin{equation*}
  \lVert f_E\rVert^p_{L^p(w)}=
  \lVert \nu_E\rVert_{L^{p^\prime}(\sigma)}^{p^\prime}=\lVert \nu^{p^\prime}\sigma\rVert_{L^1(E)}<\infty.
\end{equation*}
By the equality case of H\"older's inequality, we have 
\begin{equation}
\label{eq:IntfEpsi}
    \int f_E\nu\,dx =\int_E f\nu\, dx = \Bigl(\int_E f^p w\,dx\Bigr)^{1/p}\Bigl(\int_E\nu^{p^\prime}\sigma\,dx\Bigr)^{1/{p^\prime}}= \lVert f_E\rVert_{L^p(w)}\rVert \nu_E\rVert_{L^{p^\prime}(\sigma)}.
\end{equation}

The function $f_E$ is in $L^q(\mathbb{R}^n)$ for some $q \in (1,\infty)$ since it has compact support and is the product of an $L^\infty(\mathbb{R}^n)$ function and an $A_{p^\prime}$ weight. Indeed, by the reverse-Hölder property, there exists $q\in(1,\infty)$ 
such that $\langle \sigma^q\rangle_Q^{1/q}\lesssim \langle \sigma\rangle_Q$ for all cubes $Q\subseteq \mathbb{R}^n$. If $Q\supseteq E$, then 
\begin{equation*}
    \lVert f_E\rVert_{L^q(\mathbb{R}^n)}\leq \lVert \sigma \rVert_{L^q(E)} \lVert  \nu^{p'-1}\rVert_{L^\infty(\mathbb{R}^n)}\lesssim \lVert \sigma \rVert_{L^q(Q)} \lesssim |Q|^{\frac{1}{q}}\langle \sigma\rangle_Q<\infty.
\end{equation*}
By hypotheses, almost every $x\in \supp \phi$ lies outside $\supp \psi \supseteq \textrm{supp}\,f_E$, thus 
\begin{equation*}
    T(sf_E)(x)=\int_{\mathbb{R}^n} s(y)K(x,y)f_E(y)\, dy\geq \phi(x) \int_{\mathbb{R}^n} f_E(y)\nu(y)\,dy
\end{equation*}
for almost every $x \in \text{supp}\,\phi$. In general, $|T(sf_E)|\geq M \phi$ almost everywhere, where $M:=\int_{\mathbb{R}^n} f_E\nu\,dy$. Combining this and equation~\eqref{eq:IntfEpsi}, we find that 
\begin{equation*}
    \lVert T(sf_E)\rVert_{X}\geq M \lVert \phi\rVert_X= \lVert sf_E\rVert_{L^p(w)}\rVert \nu_E\rVert_{L^{p^\prime}(\sigma)} \lVert \phi\rVert_X
\end{equation*}
for any monotone functional $\lVert \cdot\rVert_X$. The monotone convergence theorem yields the result.
\end{proof}


\subsection{Proof of Theorem \ref{thm:LogLower}}

We next prove the lower bound for the weak-type estimate for the commutator of $H$ with the logarithmic symbol, Theorem \ref{thm:LogLower}.

\begin{prop} 
\label{prop:thm1.1.1}
Let $p \in (1,\infty)$ and $w\colon \mathbb{R}\rightarrow [0,\infty)$ be given by $w(x)=|x|^{\alpha-1}=|x|^{(p-1)(1-\delta)}$ for $\alpha \in (0,p)$, equivalently, $\delta \in (0,p')$. If $\alpha\in(0,1)$, then  
    \begin{equation*}
        \lVert [\log|\cdot|,H]\rVert_{L^p(w)\to L^{p,\infty}(w)}\gtrsim [w]_{A_p}^2, 
    \end{equation*}
    and if $\delta\in(0,\frac{1}{1+\log 2}]$, then
    \begin{equation*}
        \lVert [\log|\cdot|,H]\rVert_{L^p(w)\to L^{p,\infty}(w)}\gtrsim [w]_{A_p}^{\frac{1}{p}+\frac{p^\prime}{p}}.
    \end{equation*}
\end{prop}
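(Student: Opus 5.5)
We will apply Theorem \ref{thm:SepVar} with $X = L^{p,\infty}(w)$ to the kernel $K_b(x,y) = \frac{1}{\pi}\frac{\log|x|-\log|y|}{x-y}$ of $[\log|\cdot|,H]$. For each of the two cases we choose supports $\Phi=\supp\phi$ and $\Psi=\supp\psi$ on which this kernel has a fixed sign. Then we compute $\|\phi\|_{L^{p,\infty}(w)}$ and $\|\psi\|_{L^{p'}(\sigma)}$ for the power weight $w(x)=|x|^{\alpha-1}$. The weight characteristic is $[w]_{A_p}\approx \alpha^{-1}\delta^{1-p}$, so in the regime $\alpha\in(0,1)$ (i.e.\ $\delta$ bounded away from $0$) we have $[w]_{A_p}\approx\alpha^{-1}$. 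In the regime of small $\delta$ (so $\alpha$ near $p$) we have $[w]_{A_p}\approx \delta^{1-p}$.

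\textbf{First bound.} We take $s=1$, $\phi=\chi_{(0,1)}$, and $\psi(y)=\frac{\log y}{\pi y}\chi_{(1,\infty)}(y)$. For $x\in(0,1)$ and $y>1$ we have $\log x-\log y\le -\log y<0$ and $0<y-x<y$, so $K_b(x,y)\ge \frac{\log y}{\pi y}$, which gives \eqref{eq:KernelLower}. Next, $\|\phi\|_{L^{p,\infty}(w)}=w((0,1))^{1/p}=\alpha^{-1/p}$. For $\psi$, note $\sigma(y)=y^{(1-\alpha)/(p-1)}$, so
$$
\|\psi\|_{L^{p'}(\sigma)}^{p'}\approx\int_1^\infty (\log y)^{p'}y^{-p'+\frac{1-\alpha}{p-1}}\,dy .
$$
The exponent equals $-1-\frac{\alpha}{p-1}$. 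Substituting $y=e^t$ turns the integral into $\int_0^\infty t^{p'}e^{-\alpha t/(p-1)}\,dt\approx \alpha^{-p'-1}$. Hence $\|\psi\|_{L^{p'}(\sigma)}\approx\alpha^{-1-1/p'}$, and the product is $\alpha^{-1/p-1-1/p'}=\alpha^{-2}\approx[w]_{A_p}^2$.

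\textbf{Second bound.} We take $s=1$, $\phi(x)=\frac{1}{\pi x}\chi_{(1,\infty)}(x)$, and $\psi(y)=-\log y\,\chi_{(0,1)}(y)$. For $x>1>y>0$ we have $\frac{\log x-\log y}{x-y}\ge\frac{-\log y}{x}$, since $\log x\ge 0$ and $x-y<x$, so \eqref{eq:KernelLower} holds. Here $w=x^{(p-1)(1-\delta)}$ and $\sigma=y^{\delta-1}$. We compute
$$
\|\psi\|_{L^{p'}(\sigma)}^{p'}=\int_0^1|\log y|^{p'}y^{\delta-1}\,dy=\Gamma(p'+1)\,\delta^{-p'-1},
$$
so $\|\psi\|_{L^{p'}(\sigma)}\approx\delta^{-1-1/p'}$.

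The weak norm of $\phi$ is the step that needs the restriction on $\delta$, and we expect it to be the main obstacle. Since $\phi\le 1/\pi$, we take $\lambda<1/\pi$; then $\{\phi>\lambda\}=(1,1/(\pi\lambda))$ and
$$
w(\{\phi>\lambda\})=\frac{(\pi\lambda)^{-\gamma}-1}{\gamma},\qquad \gamma=(p-1)(1-\delta)+1=p-(p-1)\delta .
$$
This gives $\lambda^p w(\{\phi>\lambda\})\approx \lambda^{(p-1)\delta}$ as $\lambda\to0$, which is bounded but does not blow up as $\delta\to0$. So the supremum over $\lambda$ must be taken at an intermediate level: we choose $\lambda$ so that $\log\frac{1}{\pi\lambda}\approx 1/\delta$. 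We expect the condition $\delta\le\frac1{1+\log2}$ to be what guarantees this choice is admissible (for instance, that $\lambda=\frac1{\pi}e^{-1/\delta}$ or a comparable explicit value keeps the annulus from degenerating). At that level, $\|\phi\|_{L^{p,\infty}(w)}\gtrsim \delta^{0}$ up to constants, since $e^{-(p-1)\delta\cdot(1/\delta)}$ is a constant.

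With $\|\phi\|_{L^{p,\infty}(w)}\gtrsim 1$, the product is $\delta^{-1-1/p'}$. On the other hand, $[w]_{A_p}^{\frac1p+\frac{p'}{p}}=\delta^{-(p-1)\frac{1+p'}{p}}=\delta^{-(p-1)/p-1}=\delta^{-1-1/p'}$, which matches. If the crude estimate of $\|\phi\|_{L^{p,\infty}(w)}$ turns out lossy, we would instead lower-bound the weak norm by testing directly on the level set $\{|[b,H]f|>\lambda\}\supseteq(1,e^{c/\delta})$.
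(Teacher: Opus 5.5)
Your argument is correct, but it places the logarithm differently from the paper. In both cases you put $\log$ on the $\psi$ side; these are exactly the strong-type test pairs of Remark~\ref{rem:1.5}. As a result, the $L^{p'}(\sigma)$ norms are exact Gamma integrals, $\approx\alpha^{-1-1/p'}$ and $\approx\delta^{-1-1/p'}$. The weak norms of $\phi$ are then trivial: $\|\chi_{(0,1)}\|_{L^{p,\infty}(w)}=\alpha^{-1/p}$, and fixing the single level $\lambda=\frac{1}{2\pi}$ gives $\|\frac{1}{\pi x}\chi_{(1,\infty)}\|_{L^{p,\infty}(w)}\geq\frac{1}{2\pi}w((1,2))^{1/p}\geq\frac{1}{2\pi}$.

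The paper instead takes $\psi$ to be the bare kernel size ($\frac{1}{\pi y}\chi_{(1,\infty)}$, resp.\ $\chi_{(0,1)}$) and puts $|\log x|$ into $\phi$. It therefore has to bound the weak norms of $|\log x|\chi_{(0,1)}$ and $\frac{\log x}{\pi x}\chi_{(1,\infty)}$ from below on the level sets $(0,e^{-1/\alpha})$ and $(y/2,y)$ with $y=e^{1/\delta}$. That is the only place $\delta\le\frac{1}{1+\log 2}$ is used: it makes $\frac{\log t}{t}$ decreasing on $[y/2,\infty)$.

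So your guess about the role of this restriction is off. Your route needs only $\delta\le 1$, so that $[w]_{A_p}\approx\delta^{1-p}$ and $w\ge 1$ on $(1,2)$. There is no ``main obstacle'' in the weak norm of $\phi$, and the fallback you sketch is unnecessary. Your version also makes it transparent that the weak-type exponent comes from the strong-type lower bound by dropping the factor $\|\frac{1}{\pi x}\chi_{(1,\infty)}\|_{L^p(w)}\approx[w]_{A_p}^{(p'-1)/p}$.

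What the paper's placement buys is reuse. In Proposition~\ref{prop4.5o} the symbol vanishes on the far region $Y=(-\infty,-4)$, so the logarithm can only sit on the $\phi$ side. The paper recycles its first computation there verbatim after the reflection $x\mapsto -x-3$, whereas your first pair would not transfer.

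Finally, when you invoke Theorem~\ref{thm:SepVar}, cite Remark~\ref{rem:1.5} (or the John--Nirenberg argument used in the paper). This justifies the integral representation of $[\log|\cdot|,H]$ on compactly supported $L^q$ functions that the theorem requires.
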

\begin{proof}
Let $b(x)=\log|x|$. If $q\in(1,\infty)$ and $f\in L^q(\mathbb{R})$ has compact support, then $b f\in L^1(\mathbb{R})$ as a consequence of the John--Nirenberg theorem and Hölder's inequality as $\|b f\|_{L^1(\mathbb{R})}\leq \|b\|_{L^{q'}(\supp f)}\|f\|_{L^q(\mathbb{R})}$. Thus $Hf$ and $H(bf)$ have integral representations, and
\begin{equation*}
    [b,H]f(x)=\bigl(bHf-H(bf)\bigr)(x)=\int_\mathbb{R}K(x,y)f(y)\, dy,
\end{equation*}
for almost every $x\notin \supp f$, where $K(x,y):= \frac{b(x)-b(y)}{\pi(x-y)}$. We have the following two lower bounds for the Hilbert kernel:
\begin{equation}
\label{eq:KHilLowBounds}
\begin{split}
    \frac{1}{\pi(x-y)}\geq \frac{1}{\pi x},\quad \quad x\in (1,\infty), \,\, y\in(0,1),\\
    -
    \frac{1}{\pi(x-y)}\geq \frac{1}{\pi y},\quad \quad x\in (0,1), \,\, y\in(1,\infty).
\end{split}
\end{equation}
If $x$ and $y$ are as in either of these two cases, then $b(x)$ and $b(y)$ have different signs and 
\begin{equation*}
    |K(x,y)| = \Bigl|\frac{b(x)-b(y)}{\pi(x-y)}\Bigr|\geq \Big|\frac{b(x)}{\pi(x-y)}\Big|.
\end{equation*}

The second case of \eqref{eq:KHilLowBounds} yields 
\begin{equation*}
    K(x,y)\geq |b(x)|\chi_{(0,1)}(x) \Bigl(\frac{1}{\pi y}\chi_{(1,\infty)}(y)\Bigr)
\end{equation*}
for $x\in (0,1)$ and $y\in(1,\infty)$. Applying Theorem~\ref{thm:SepVar}, we have 
\begin{equation*}
\begin{split}
    \lVert [b,H]\rVert_{L^p(w)\to L^{p,\infty}(w)}&\geq \Bigl\lVert \frac{1}{\pi |\cdot|}\chi_{(1,\infty)}\Bigr\rVert_{L^{p^\prime}(\sigma)}\lVert b\chi_{(0,1)}\rVert_{L^{p,\infty}(w)}.
\end{split}
\end{equation*}
Noting that $[w]_{A_p}\approx \alpha^{-1}$ for $\alpha \in (0,1)$ and that $\sigma(x)=|x|^{\delta-1}$, we have 
\begin{equation*}
    \Bigl\lVert \frac{1}{\pi |\cdot|}\chi_{(1,\infty)}\Bigr\rVert^{p'}_{L^{p^\prime}(\sigma)}=\frac{1}{\pi^{p^\prime}}\int^\infty_1 x^{-p^\prime} x^{\delta-1}\, dx\approx \frac{1}{p^\prime-\delta}=\frac{p-1}{\alpha}\approx [w]_{A_p}.
\end{equation*}
Since $|b|$ is decreasing on $(0,1)$, we have that  $|b(x)|\geq \frac{1}{\alpha}$ for $x\in(0,e^{-1/\alpha})$, and so
\begin{equation*}
\begin{split}
    \lVert b\chi_{(0,1)}\rVert^p_{L^{p,\infty}(w)}&\geq \frac{1}{\alpha^p} w\bigl((0,e^{-1/\alpha})\bigr)=\frac{e^{-1}}{\alpha^{p+1}}\approx [w]_{A_p}^{p+1}.
\end{split} 
\end{equation*}
In conclusion,
\begin{equation*}
    \lVert [b,H]\rVert_{L^p(w)\to L^{p,\infty}(w)}\gtrsim [w]^{\frac{1}{p^\prime}}_{A_p} [w]^{1+\frac{1}{p}}_{A_p}=[w]_{A_p}^2.
\end{equation*}

Theorem~\ref{thm:SepVar} and the first case of \eqref{eq:KHilLowBounds} similarly yield
\begin{equation*}
    \lVert [b,H]\rVert_{L^p(w)\to L^{p,\infty}(w)}\gtrsim\lVert \chi_{(0,1)}\rVert_{L^{p^\prime}(\sigma)}\Bigl\lVert \frac{b}{\pi |\cdot|}\chi_{(1,\infty)}\Bigr\rVert_{L^{p,\infty}(w)}.
\end{equation*}
 Noting that $[w]_{A_p}\approx \delta^{1-p}$ for $\delta \in (0,1)$, we have $\lVert \chi_{(0,1)}\rVert_{L^{p^\prime}(\sigma)}\approx \delta^{-\frac{1}{p^\prime}}\approx [w]_{A_p}^{\frac{1}{p}}$. Now, let $y=e^{1/\delta}$. If, in addition, $\delta\leq \frac{1}{1+\log 2}$, then $\frac{y}{2}\geq e$ and the function $h(t)=\frac{\log|t|}{t}$ is positive and decreases on $[\frac{y}{2},\infty)$, which means $h(t)\geq h(y)$ whenever $t\in (\frac{y}{2},y)$. Therefore, 
\begin{equation*}
\begin{split}
    \Bigl\lVert \frac{b}{\pi |\cdot|}\chi_{(1,\infty)}\Bigr\rVert^p_{L^{p,\infty}(w)}&\geq\frac{1}{\pi^p } h(y)^p w\bigl((y/2,y)\bigr)\gtrsim \frac{1}{(\delta y)^p}y^{\alpha}\\
    &=\frac{1}{\delta^p}y^{(1-p)\delta}=\frac{1}{\delta^p} e^{1-p}\approx [w]^{p'}_{A_p}.
\end{split}
\end{equation*}
In conclusion,
\begin{equation*}
    \lVert [b,H]\rVert_{L^p(w)\to L^{p,\infty}(w)}\gtrsim[w]_{A_p}^{\frac{1}{p}+\frac{p'}{p}}. \qedhere
\end{equation*}
\end{proof}

\begin{proof}[Proof of Theorem \ref{thm:LogLower}]
Let $w_\alpha(x)=|x|^{\alpha-1}$ for $\alpha\in(0,1)$. Take $M>0$ so that $ [w_\alpha]_{A_p}\leq \frac{M}{\alpha}$ for every $\alpha$. For $t>M$, fix $\alpha=\frac{M}{t}$, then $[w_\alpha]_{A_p}\approx \frac{1}{\alpha}\approx t$ and, by Proposition \ref{prop:thm1.1.1}, we have
\begin{equation*}
\begin{split}
    \varphi_{\log|\cdot|,H}(t)
    \geq \lVert [\log|\cdot|,H]\rVert_{L^p(w_\alpha)\to L^{p,\infty}(w_\alpha)}\gtrsim  t^2.
\end{split}
\end{equation*}
Similarly, let $w_\delta=|x|^{(p-1)(1-\delta)}$ for $\delta\in(0,\frac{1}{1+\log 2})$. Take $M>0$ so that \mbox{$ [w_\delta]_{A_p}\leq \frac{M}{\delta^{p-1}}$} for each $\delta$. For $t>M(1+\log 2)^{p-1}$, fix $\delta=(\frac{M}{t})^{\frac{1}{p-1}}$, then $[w_\delta]_{A_p}\approx t$ and, by Proposition \ref{prop:thm1.1.1}, we have
\begin{equation*}
    \varphi_{\log|\cdot|,H}(t)\geq \lVert [\log|\cdot|,H]\rVert_{L^p(w_\delta)\to L^{p,\infty}(w_\delta)}\gtrsim t^{\frac{1}{p}+\frac{p'}{p}}. \qedhere
\end{equation*}
\end{proof}

\subsection{Proof of Theorem~\ref{thm:GeneralLower}}\label{section:GeneralLoer}
We now prove the lower bound for the weak-type estimate of general commutators, Theorem \ref{thm:GeneralLower}. We first define an appropriate family of lacunary weights. This construction is related to the example from \cite{M2022}*{Section 3.1}, which was used to prove the sharpness of the weighted $L^2$ bound for strong-sparse operators. We next introduce a symbol $b \in \text{BMO}(\mathbb{R})$, adapted to this family of lacunary weights, for which the bound \eqref{eq:GeneralUpper} is saturated. We finish by proving Theorem \ref{thm:GeneralLower}.

\subsubsection{Construction of the lacunary weights}
For $p\in(1,\infty)$, let $N_p$ be the smallest natural number such that $N_p^{-\frac{1}{p-1}}<\frac{1}{8}$. For every integer $m\geq N_p$ and $k \in \mathbb{Z}$, let 
\begin{equation*}
    \mu_{p}:= m^{-1/(p-1)}, \quad r_k := 2^{-k},  \quad\text{and}\quad M_{p,k}:= ((1-\mu_p)r_k,(1+\mu_p)r_k).
\end{equation*}
Define $w_{p,m}$ on $[0,\infty)$ as follows:
\begin{equation}
\label{eq:wpmDef}
    w_{p,m}(x):=\begin{cases}
        r_k^{\frac{1}{m}-1}\Bigl(\frac{|x-r_k|}{\mu_p r_k}\Bigr)^{(p-1)(1-\mu_p)} & x\in M_{p,k},\\
        |x|^{\frac{1}{m}-1} & x\notin \bigcup\limits_{k\in\mathbb{Z}}M_{p,k},
    \end{cases}
\end{equation}
and extend $w_{p,m}$ evenly: $w_{p,m}(-x):=w_{p,m}(x)$ for all $x>0$.

\begin{lemma}
\label{lemLB2}
If $p \in (1,\infty)$ and $m\geq N_p$ is an integer, then $w_{p,m} \in A_p$ with \looseness=-1
$$
    [w_{p,m}]_{A_p}\approx m,
$$
where $w_{p,m}$ is as defined in \eqref{eq:wpmDef} and $N_p$ is the smallest natural number such that $N_p^{-\frac{1}{p-1}}<\frac{1}{8}$.
\end{lemma}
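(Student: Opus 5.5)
The plan is to compare $w:=w_{p,m}$ with two model power weights whose $A_p$ constants are already known from Section \ref{sec:Preliminaries}. The first is the background weight $w_0(x)=|x|^{\frac1m-1}$, which has $\alpha=\frac1m\in(0,1)$ and so $[w_0]_{A_p}\approx m$. The second is the local bump on each $M_{p,k}$: after translating $r_k$ to $0$, rescaling by $\mu_pr_k$ and normalizing by $r_k^{\frac1m-1}$, it is exactly the power weight $|t|^{(p-1)(1-\mu_p)}$ restricted to $(-1,1)$. That weight has $\delta=\mu_p$, so its $A_p$ constant is $\approx \mu_p^{1-p}=m$.

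\emph{Preliminary observations.} Since $\mu_p<\frac18$, on $M_{p,k}$ we have $|x|\approx r_k$, hence $w_0\approx r_k^{\frac1m-1}$ there, with constants independent of $m$. Writing $\sigma_0:=w_0^{1-p'}$, this gives three facts:
\begin{enumerate}
\item $w\lesssim w_0$ everywhere.
\item $w(M_{p,k})\approx w_0(M_{p,k})\approx \mu_p r_k^{1/m}$.
\item $\sigma(M_{p,k})\approx \sigma_0(r_k)\,\mu_pr_k\cdot\mu_p^{-1}=\sigma_0(r_k)\,r_k$, using $\int_0^1 t^{-(1-\mu_p)}\,dt=\mu_p^{-1}$.
\end{enumerate}
In words, each bump adds to $\sigma$ a mass comparable to the background $\sigma_0$-mass of the whole dyadic annulus $\{|x|\approx r_k\}$. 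Since $\sigma_0(x)=|x|^{(1-\frac1m)(p'-1)}$ is a positive power with exponent bounded below uniformly for $m\ge N_p\geq 2$, summing over annuli gives $\sigma((-R,R))\lesssim \sigma_0((-R,R))$ by a geometric series.

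\emph{Lower bound.} Testing on the interval $M_{p,k}$ gives $\langle w\rangle\langle\sigma\rangle^{p-1}\approx r_k^{\frac1m-1}\,(\sigma_0(r_k)\mu_p^{-1})^{p-1}\approx \mu_p^{1-p}=m$, which already proves $[w]_{A_p}\gtrsim m$. Alternatively, one can test on $(-R,R)$ and use $\langle w\rangle\approx\langle w_0\rangle$ together with the background constant.

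\emph{Upper bound.} Fix an interval $I$ of length $L$. By evenness, and by replacing an interval that meets $0$ with a symmetric interval of comparable length, it suffices to treat the following cases.
\begin{enumerate}
\item If $I$ is symmetric about $0$, then by fact (1) and the geometric-series bound $\langle w\rangle_I\langle\sigma\rangle_I^{p-1}\lesssim \langle w_0\rangle_I\langle\sigma_0\rangle_I^{p-1}\lesssim m$.
\item If $I\subseteq(0,\infty)$ and $I$ is contained in a fixed dilate of some $M_{p,k}$ with $L\lesssim\mu_pr_k$, the estimate reduces, after rescaling, to the local power weight $|t|^{(p-1)(1-\mu_p)}$ (times an essentially constant factor). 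This gives a bound $\lesssim m$.
\item If $I\subseteq(0,\infty)$ does not meet the bump region relative to its size, then $w$ and $\sigma$ are comparable to constants on $I$ and the product is $O(1)$.
\item The intermediate scale: $\mu_pr_k\lesssim L\lesssim r_k$ and $I$ meets $M_{p,k}$. Here fact (1) gives $\langle w\rangle_I\lesssim r_k^{\frac1m-1}$. The $\sigma$ average is at most the background value plus the bump contribution, so $\langle \sigma\rangle_I\lesssim\sigma_0(r_k)\bigl(1+\frac{\mu_pr_k}{L}\mu_p^{-1}\bigr)$; I expect the first step here to be sharpening this by integrating the bump only over $I\cap M_{p,k}$. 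This yields a product $\lesssim (r_k/L)^{p-1}\lesssim\mu_p^{1-p}=m$.
\item Intervals with $L\gtrsim r_k$ at their location are handled as in case 1, via the annulus-summing bound.
\end{enumerate}

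I expect the main obstacle to be exactly the intermediate-scale case (case 4) and its interaction with case 5. There one must check that the extra $\sigma$-mass carried by the bumps never compounds with the background $A_p$ behaviour near the origin (which already costs $m$) to produce a factor like $m^2$. The key point to verify is the geometric summation $\sum_{r_k<R}\sigma_0(r_k)r_k\lesssim\sigma_0(R)R$ with constants uniform in $m$, together with the fact that $w$ never exceeds $w_0$.
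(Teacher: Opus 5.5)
Your proposal is correct. The upper-bound half is essentially the paper's case analysis:
\begin{itemize}
\item Intervals at the origin are handled by $w\lesssim|x|^{1/m-1}$ together with the geometric sum of the bump masses $\int_{M_{p,k}}\sigma\,dx=2r_k^{a+1}$. The ratio of that sum is $2^{-(a+1)}\le\frac{1}{2}$ because $a\ge0$, and this is exactly why no second factor of $m$ appears, which was the obstacle you flagged.
\item Intervals $(R,R+L)$ with $L\ge R/3$ are reduced to that case.
\item Shorter intervals meet at most one $M_{p,k}$ and are split according to $L$ versus $\mu_p r_k$. The large-$L$ case is closed by the crude bound $\frac{1}{L}\int_{M_{p,k}}\sigma\,dx\lesssim r_k^a/\mu_p$ that you already wrote, so the sharpening you anticipated is not needed.
\end{itemize}

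Two points differ from the paper.

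For $L\lesssim\mu_p r_k$, you compare $w$ pointwise with the rescaled power weight $r_k^{1/m-1}|t|^{(p-1)(1-\mu_p)}$ on a fixed dilate of $M_{p,k}$. A factor $3$ works, since $\mu_p<\frac{1}{8}$ keeps that dilate away from $0$ and from $M_{p,k\pm1}$. The paper instead rescales exactly only when $I\subseteq M_{p,k}$. It treats intervals sticking out of $M_{p,k}$ with $L<\mu_p r_k/4$ separately, via the bound $\sigma\le\frac{4}{3}r_k^a$ near the ends of $M_{p,k}$.

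More notably, your lower bound tests on $M_{p,k}$ itself. This gives exactly $\langle w\rangle_{M_{p,k}}\langle\sigma\rangle_{M_{p,k}}^{p-1}=m/((p-1)(1-\mu_p)+1)\ge m/p$, so the bumps alone saturate the characteristic. The paper tests on $[0,r_K]$, where the background singularity at the origin does the work. That route costs an extra estimate, $\int_0^R w\,dx\ge(1-6\mu_p)mR^{1/m}$, showing that the bumps do not remove too much $w$-mass.

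Both tests succeed because the construction is balanced: the background exponent $\alpha=1/m$ and the bump parameter $\delta=\mu_p$ each give a power weight of characteristic $\approx m$. Your test is the shorter of the two.
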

\begin{proof}
For simplicity, we use the following notation:
\begin{equation*}
    \varepsilon=\frac{1}{m},\quad \mu=\mu_{p,m}=m^{-\frac{1}{p-1}},\quad M_{k}=M_{p,k}, \quad w=w_{p,m},\quad \text{and}\quad \sigma= (w_{p,m})^{-\frac{1}{p-1}}.
\end{equation*}
If $a=(1-\varepsilon)/(p-1)$, then 
\begin{equation*}
    \sigma(x)=\begin{cases}
        r^{a}_k \Bigl(\frac{|x-r_k|}{\mu r_k}\Bigr)^{\mu-1}  & x\in M_k,\\
        |x|^a & x\notin \bigcup\limits_{k\in \mathbb{Z}}M_k,
    \end{cases}
\end{equation*}
for $x\geq 0$ and $\sigma$ inherits evenness from $w$.

\smallskip
\noindent
\textbf{Lower bound  \texorpdfstring{\boldmath$[w]_{A_p}\gtrsim m$}{}:} We prove that for $K\in\mathbb{Z}$, $R=r_K$, and $I=[0,R]$, one has 
\begin{equation*}
     \int_I w\,dx \gtrsim m R^\varepsilon \quad \text{and}\quad\int_I \sigma\,dx\gtrsim R^{a+1},
\end{equation*}
which implies that $[w]_{A_p}\gtrsim \langle w\rangle_{I}\langle \sigma\rangle^{p-1}_{I}\gtrsim m R^{\varepsilon+(a+1)(p-1)-p}=m$. Indeed, observe that $x^{\varepsilon-1}\leq ((1-\mu)r_k)^{\varepsilon-1}$ on $M_k$, so
\begin{equation}
\label{eq:4.7}
    \int_{M_k} x^{\varepsilon-1}\, dx\leq 2\mu(1-\mu)^{\varepsilon-1} r_k^{\varepsilon}\leq 3\mu r^\varepsilon_k, 
\end{equation}
where the last inequality comes from $\mu<\frac{1}{8}$. Summing a geometric series and using that $\frac{\varepsilon}{1-2^{-\varepsilon}}\leq 2$ for  $\varepsilon\leq 1$, we bound 
\begin{equation*}
\begin{split}
    \int_I w \,dx&\geq \int^R_0 x^{\varepsilon-1}\, dx- \sum_{k\geq K} \int_{M_k} x^{\varepsilon-1}\, dx\\
    &\geq mR^\varepsilon-3\mu \frac{(2^{-K})^\varepsilon}{1-2^{-\varepsilon}}\\
    &=m R^\varepsilon \Bigl(1-3\mu \frac{\varepsilon}{1-2^{-\varepsilon}}\Bigr)\\
    &\geq  mR^\varepsilon(1-6\mu).
\end{split}
\end{equation*}
This implies $\int_I w \,dx> \frac{1}{4}mR^\varepsilon$ for $\mu<1/8$. Finally, note that
\begin{equation}
\label{eq:4.8}
     \int_{M_k}\sigma \,dx =r^{a+1}_k\mu \int^1_{-1}|x|^{\mu-1}\, dx=2r^{a+1}_k.
\end{equation}
Similarly, $\int^{r_k}_{(1-\mu)r_k}\sigma=r_k^{a+1}$. 
Thus $M_K\cap (0,R)=((1-\mu)r_K,r_K)\subseteq I$ implies $\int_I \sigma \,dx\geq  R^{a+1}$.

\smallskip
\noindent
\textbf{Upper bound  \texorpdfstring{\boldmath$[w]_{A_p}\lesssim m$}{}:} We first show that for $R>0$ and $I=[0,R]$, one has 
\begin{equation*}
     \int_I w\,dx\lesssim m R^\varepsilon \quad\text{and}\quad \quad\int_I \sigma\,dx\lesssim R^{a+1}.
\end{equation*}
As in the previous part, the exponents of $R$ here imply $\langle w\rangle_I \langle \sigma\rangle^{p-1}_I\lesssim m$. See that between $M_k$ and $M_{k-1}$, we have 
\begin{equation}
\label{eq:4.9}
    \int^{(1-\mu)r_{k-1}}_{(1+\mu)r_k} w\,dx = r_k^\varepsilon \frac{(2-2\mu)^\varepsilon-(1+\mu)^\varepsilon}{\varepsilon}\leq r^\varepsilon_k\frac{2^\varepsilon-1}{\varepsilon}\leq r^\varepsilon_k,
\end{equation}
where we have used that $\frac{2^\varepsilon-1}{\varepsilon}\leq 1$ when $\varepsilon\leq 1$.
Take the smallest $K\in\mathbb{Z}$ such that $M_K$ intersects $I$. Then $\frac{1}{2}r_K<R$. Since $w(x)\lesssim x^{\varepsilon-1}$, using \eqref{eq:4.7} and \eqref{eq:4.9}, we get
\begin{equation*}
    \int_I w\,dx \leq \sum_{k\geq K} (3\mu +1)r^\varepsilon_k=(3\mu+1)\frac{r_K^{\varepsilon}}{1-2^{-\varepsilon}}\leq m(2R)^\varepsilon(3\mu+1)\frac{\varepsilon}{1-2^{-\varepsilon}}<6mR^\varepsilon. 
\end{equation*}
Similarly, between $M_k$ and $M_{k-1}$, we have 
\begin{equation*}
    \int^{(1-\mu)r_{k-1}}_{(1+\mu)r_k} \sigma \,dx \leq r^{a+1}_k \frac{2^{a+1}}{a+1}\lesssim_p r^{a+1}_k.
\end{equation*}
This bound and \eqref{eq:4.8} imply
\begin{equation*}
    \int_I \sigma\,dx \lesssim \sum_{k\geq K}r^{a+1}_k\leq \frac{r^{a+1}_K}{1-2^{-a-1}}\lesssim R^{a+1},
\end{equation*}
as desired.

If $I=(-r_1,r_2)$ with $r_1,r_2\geq 0$, one of them different from zero, evenness of $w$ and $\sigma$ imply that if $R=\max\{r_1,r_2\}$ and $J=[0,R]$, then 
\begin{equation*}
    \langle w\rangle_I \langle \sigma\rangle^{p-1}_{I}\leq \langle 2w\rangle_J \langle 2\sigma\rangle_J^{p-1}\lesssim m.
\end{equation*}
Thus, we shall restrict our analysis to the cases where $I$ does not intersect the origin. Without loss of generality, we take $I\subseteq(0,\infty)$.

Let $I=(R,R+L)$ with $R,L>0$. If $L\geq R/3$, then $R+L\leq 4L$ and again an estimate using the first case establishes the desired upper bound. Assuming $L<R/3$, we prove that $I$ meets at most one set $M_k$. Let $k$ be the largest integer such that $R<(1+\mu)r_k$. Then $\frac{1}{2} r_k<(1+\mu)r_{k+1}\leq R$, and so $I\cap M_{k+1}=\varnothing$. Also, we have that 
\begin{equation*}
    L<\frac{R}{3}\quad\text{and}\quad  \mu<\frac{1}{8}  \quad \implies \quad  R+L<\frac{7}{4}r_k<(1-\mu)r_{k-1},
\end{equation*}
so $I\cap M_{k-1}=\emptyset$. 

If $I\subseteq M_k$, then $w$ behaves in $I$ as a multiple of $|x|^{(p-1)(1-\mu)}$ around 0 in the following sense. Let $J=\{\frac{x-r_k}{\mu r_k}\colon x\in I\}\subseteq (-1,1)$, which is an interval with length $|J|=|I|/\mu r_k$, so
\begin{equation*}
    \langle w\rangle_I= \frac{1}{|I|}\int_I r^{\varepsilon-1}_k\Bigl(\frac{|x-r_k|}{\mu r_k}\Bigr)^{(p-1)(1-\mu)}\, dx =  \frac{\mu r^\varepsilon_k }{|I|}\int_{J} |x|^{(p-1)(1-\mu)}\, dx= r_k^{\varepsilon-1}\langle |\cdot|^{(p-1)(1-\mu)}\rangle_J.
\end{equation*}
The equivalent expression $\langle \sigma\rangle_I^{p-1}=r_k^{1-\varepsilon}\langle |\cdot|^{\mu-1}\rangle^{p-1}_J$ can be obtained similarly. Using the known $A_p$ characteristic for power weights,
\begin{equation*}
    \langle w\rangle_I\langle \sigma\rangle_I^{p-1}\leq [|\cdot|^{(p-1)(1-\mu)}]_{A_p}\lesssim\mu^{1-p}=m.
\end{equation*}

Now, assume $I$ is not contained in $M_k$, and let $I_1= I\cap M_k$ and $I_2= I\setminus I_1$. Since $L<R/3$ and $w(x)\lesssim |x|^{\varepsilon-1}$ (also the case for $x\in M_k$), we have
\begin{equation*}
    \langle w\rangle_I\lesssim \frac{1}{L}\frac{(R+L)^\varepsilon-R^\varepsilon}{\varepsilon}\lesssim R^{\varepsilon}\frac{\log(1+L/R)}{L}\leq R^{\varepsilon-1},
\end{equation*}
where we have used $\log(1+x)<x$.  We finally prove $\frac{1}{L}\int_{I_j}\sigma\, dx\lesssim \frac{R^a}{\mu}$, for each $j$, which concludes the proof since
\begin{equation*}
    \langle w\rangle_I\langle \sigma\rangle_I^{p-1}=\langle w\rangle_I\biggl(\frac{1}{L}\int_{I_1}\sigma\,dx +\frac{1}{L}\int_{I_2}\sigma\,dx \biggr)^{p-1}\lesssim R^{\varepsilon-1}R^{a(p-1)}\mu^{1-p}= m.
\end{equation*}
The case $j=2$ is straightforward since $x\leq R+L$ and $\sigma=x^a$ outside $M_k$ implies 
\begin{equation*}
    \frac{1}{L}\int_{I_2}\sigma\,dx \leq \frac{|I_2|}{L}(R+L)^a\lesssim R^a.
\end{equation*}
For $j=1$, we consider two cases. If $L\geq \frac{\mu r_k}{4}$, then
\begin{equation*}
    \frac{1}{L}\int_{I_1}\sigma\,dx \leq \frac{1}{L}\int_{M_k}\sigma\,dx \leq \frac{2}{L}r^{a+1}_k\leq \frac{8}{\mu} r^a_k\lesssim \frac{R^a}{\mu}.
\end{equation*}
If $L\leq \frac{\mu r_k}{4}$, then $I_1$ is contained in a corner of $M_k$, without touching the center. If $|I_1|=0$, the result is immediate; otherwise, by the symmetry of $\sigma$ around $r_k$, we can assume $I_1$ is in the right corner and for each $x\in I_1$, we have $x\geq (1+\mu )r_k-\frac{\mu r_k}{4}=(1+\frac{3 \mu}{4})r_k$, and since $\mu<1$, it follows that 
\begin{equation*}
    \sigma(x)=r^a_k\biggl(\frac{x-r_k}{\mu r_k}\biggr)^{\mu-1}\leq \frac{4}{3}r_k^a.
\end{equation*}
Thus,
\begin{equation*}
\begin{split}
    \frac{1}{L}\int_{I_1}\sigma&\leq \frac{1}{|I_1|}\int_{I_1} \sigma\leq \frac{4}{3}r^a_k \lesssim R^a .
    \qedhere
\end{split}
\end{equation*}
\end{proof}

\subsubsection{Construction of the symbol}
We next define a fixed $\text{BMO}(\mathbb{R})$ symbol $b$ corresponding to the family of lacunary weights $w_{p,m}$ previously constructed. 
\begin{lemma}
\label{lemLB4}
    If $B$ is defined by
\begin{equation*}
B(x):=\sum^\infty_{k=1}\log^+\biggl(\frac{r_{k+3}}{|x-r_k|}\biggr),
\end{equation*}
then $B\in \text{BMO}(\mathbb{R})$, and so is $b$, defined as
\begin{equation*}
    b(x):=\log^+ \frac{1}{|x+3|}+B(x).
\end{equation*}
\end{lemma}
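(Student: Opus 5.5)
We need to show that $B\in\text{BMO}(\mathbb{R})$, and then that $b\in\text{BMO}(\mathbb{R})$. The plan rests on one basic fact: for $c\in\mathbb{R}$ and $\rho>0$, the function $g_{c,\rho}(x):=\log^+\bigl(\rho/|x-c|\bigr)$ lies in $\text{BMO}(\mathbb{R})$ with norm bounded independently of $c$ and $\rho$. To see this, write $g_{c,\rho}(x)=\max\bigl(\log\rho-\log|x-c|,0\bigr)$. The function $\log|x-c|$ is a translate of $\log|x|$, and BMO norms are invariant under translation and dilation. Moreover, $\max(u,0)=\tfrac12(u+|u|)$, and $\||u|\|_{\text{BMO}}\le 2\|u\|_{\text{BMO}}$. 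Together these give $\|g_{c,\rho}\|_{\text{BMO}}\lesssim\|\log|\cdot|\|_{\text{BMO}}$ uniformly. This immediately handles the first term $\log^+\frac{1}{|x+3|}=g_{-3,1}$ of $b$.

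The key structural point for $B=\sum_{k\ge1}g_{r_k,r_{k+3}}$ is that the supports $\supp g_{r_k,r_{k+3}}=[r_k-r_{k+3},\,r_k+r_{k+3}]=[\tfrac78 r_k,\tfrac98 r_k]$ are pairwise disjoint and well separated. Indeed, $\tfrac98 r_{k+1}=\tfrac9{16}r_k<\tfrac78 r_k$, and the gap between consecutive supports is comparable to $r_k$. Each summand is supported in an interval $J_k:=[\tfrac78r_k,\tfrac98r_k]$, vanishes outside it, and satisfies $\int_{J_k}g_k\,dx=2r_{k+3}=\tfrac14 r_k$.

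To verify the BMO condition, fix an interval $I$ and take $J$ to be the smallest interval containing the support pieces $J_k$ that $I$ meets. There are three cases.

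\emph{Case 1: $I$ meets at most one $J_k$.} Then $B=g_k$ on $I$, and the uniform bound from the first paragraph applies.

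\emph{Case 2: $I$ meets $J_k$ and $J_{k+j}$ with $j\ge1$, where $k$ is the smallest index met.} The separation then forces $|I|\gtrsim r_{k+1}\approx r_k$. I will compare $B$ with the constant $0$ and use $\langle|f-\langle f\rangle_I|\rangle_I\le2\langle|f|\rangle_I$. This gives
\[
\frac1{|I|}\int_I B\,dx\le\frac1{|I|}\sum_{j\ge k}\int_{J_j}g_j\,dx=\frac1{|I|}\sum_{j\ge k}\tfrac14 r_j\lesssim\frac{r_k}{|I|}\lesssim1.
\]
Intervals containing $0$ are covered by the same computation, since the sum is geometric.

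\emph{Case 3: $b$.} Since $\supp g_{-3,1}=[-4,-2]$ is disjoint from $\supp B\subseteq[0,\tfrac98\cdot\tfrac12]$ with a gap of order $1$, the same two-case argument (a single piece, or a long interval with an $L^1$-average bound) applies. Alternatively, BMO is a vector space, so $b\in\text{BMO}(\mathbb{R})$ follows directly.

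The main obstacle is making the comparison in Case 2 rigorous. I need $\int_{J_k}g_k\approx r_k$ and $|I|\gtrsim r_k$ whenever two supports are touched. This depends on the separation constants ($\tfrac98 r_{k+1}<\tfrac78 r_k$), and that separation is exactly why the inner radius $r_{k+3}$ was chosen.
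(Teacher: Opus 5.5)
Your proposal is correct and follows essentially the same route as the paper. Both use a uniform BMO bound for each translated and dilated localized logarithm and the disjointness of the supports $[\tfrac78 r_k,\tfrac98 r_k]$. For intervals meeting two or more supports, both derive $\langle B\rangle_I\lesssim r_K/|I|\lesssim 1$ from $\int g_k=\tfrac14 r_k$ and $|I|\gtrsim r_K$, and then treat $b$ as a sum of two BMO functions. Your justification of $\log^+(1/|t|)\in\text{BMO}(\mathbb{R})$ via $\max(u,0)=\tfrac12(u+|u|)$ fills in a fact the paper asserts without proof.
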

\begin{proof}
    The function $\psi(t):=\log^+\frac{1}{|t|}=(-\log|t|)_+$ belongs to $\text{BMO}(\mathbb{R})$. For $k\in\mathbb{N}$, let
\begin{equation*}
    \psi_{k}(x):=\psi\biggl(\frac{x-r_k}{r_{k+3}}\biggr).
\end{equation*}
See that $B=\sum^\infty_{k=1}\psi_{k}$ and $\psi_{k}$ vanishes almost everywhere off $S_{k}:=(r_k-r_{k+3},r_k+r_{k+3})$. The $\text{BMO}(\mathbb{R})$ norm is preserved under translations and dilations, so $\lVert \psi_{k}\rVert_{\textrm{BMO}(\mathbb{R})}=\lVert \psi\rVert_{\textrm{BMO}(\mathbb{R})}$. \looseness=-1

Let $I$ be an interval. If $I$ intersects none of the sets $S_{k}$, then $B=0$ on $I$. If there is a unique $k$ such that $I$ intersects $S_{k}$, then $B=\psi_{k}$ on $I$ and $\langle |B-\langle B\rangle_I|\rangle_I\leq \lVert \psi\rVert_{\textrm{BMO}(\mathbb{R})}$. Suppose $I$ meets two or more supports. If $I\cap S_{k}\neq \varnothing$, then $r_{k+3}=\frac{1}{8}r_k$ implies 
\begin{equation}
\label{eq:Igeq}
    |I|\geq \frac{7}{8}r_k-\frac{9}{8}\frac{r_k}{2}>\frac{1}{4}r_k.
\end{equation}
Let $K$ be the smallest integer such that $I$ intersects $S_{K}$. See that 
\begin{equation}
\label{eq:Intpsipk}
    \int_\mathbb{R}\psi_{k} \,dx = 2 r_{k+3}\int^1_0 \log\frac{1}{t}\, dt=2 r_{k+3}\int^\infty_0 \lambda e^{-\lambda}\, d\lambda=r_{k+2}.
\end{equation}
Therefore,
\begin{equation*}
    \frac{1}{|I|}\int_I B\, dx\leq  \frac{1}{|I|}\sum_{k\geq K} \int_\mathbb{R} \psi_{k}\,dx = \frac{r_{K+1}}{|I|}\leq 2.
\end{equation*}
The last inequality follows from \eqref{eq:Igeq}. Since $B\geq 0$, the triangle inequality yields 
\begin{equation*}
    \frac{1}{|I|}\int_I|B-\langle B\rangle_I|\,dx \leq \frac{2}{|I|}\int_I B\, dx\leq 4.
\end{equation*}
Thus $B\in \text{BMO}(\mathbb{R})$. Finally, $b \in \text{BMO}(\mathbb{R})$ as it is the sum of two $\text{BMO}(\mathbb{R})$ functions.
\end{proof}

\begin{prop}
\label{prop4.5o}
If $p\in(1,\infty)$ and $m\geq N_p$ is an integer, then
\begin{equation*}
    \|[b,H]\|_{L^p(w_{p,m})\to L^{p,\infty}(w_{p,m})}\gtrsim [w_{p,m}]_{A_p}^{p'},
\end{equation*}
where $w_{p,m}$ is as defined in \eqref{eq:wpmDef} and $N_p$ is the smallest natural number such that $N_p^{-\frac{1}{p-1}}<\frac{1}{8}$. If $\alpha\in(0,1)$ and $v(x)=|x+3|^{\alpha-1}$, then
\begin{equation*}
    \|[b,H]\|_{L^p(v)\to L^{p,\infty}(v)}\gtrsim [v]_{A_p}^2.
\end{equation*}
\end{prop}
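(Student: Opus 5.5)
The plan is to apply Theorem~\ref{thm:SepVar} twice with $s=1$, choosing $\phi$ and $\psi$ so that $b(x)=0$ on $\supp\phi$. The kernel $K(x,y)=\frac{b(x)-b(y)}{\pi(x-y)}$ of $[b,H]$ then has a definite sign and separates as required by \eqref{eq:KernelLower}. As in the proof of Proposition~\ref{prop:thm1.1.1}, the John--Nirenberg theorem shows that $[b,H]$ has this integral representation off the support of compactly supported $f\in L^q$, so Theorem~\ref{thm:SepVar} applies with $X=L^{p,\infty}(\cdot)$.

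\textbf{Second estimate (weight $v$).} Near $-3$ the symbol is $\log^+\frac{1}{|x+3|}$, and $b\equiv 0$ on $(-\infty,-4)$ because $B$ is supported in $\bigcup_k S_k\subseteq(0,2)$. I will take $x\in(-3,-2)$ and $y<-4$. Then $x-y>0$ and $x-y\le |y+3|+1\le 2|y+3|$, so
$$
K(x,y)\ge \frac{b(x)}{2\pi}\chi_{(-3,-2)}(x)\,\frac{1}{|y+3|}\chi_{(-\infty,-4)}(y).
$$
After the translation $x\mapsto x+3$, the computations are exactly those of the first half of Proposition~\ref{prop:thm1.1.1}. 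Since $[v]_{A_p}\approx\alpha^{-1}$, we get $\|\psi\|_{L^{p'}(\sigma)}^{p'}\approx (p'-\delta)^{-1}\approx[v]_{A_p}$. Using the level set $\{|x+3|<e^{-1/\alpha}\}$, we get $\|\phi\|_{L^{p,\infty}(v)}^p\gtrsim \alpha^{-p-1}$. The product of the two norms is $[v]_{A_p}^{1/p'+1+1/p}=[v]_{A_p}^2$.

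\textbf{First estimate (weights $w=w_{p,m}$).} Here the roles are reversed: the logarithmic mass of $b$ sits in $\psi$, on the spikes $M_k$ of $\sigma$. I fix $K:=m$ and set
$$
\phi=\chi_{G},\qquad G:=(0,r_K)\setminus\Big(\bigcup_{k}S_k\cup\{b\neq0\}\Big),\qquad \psi(y)=\frac{b(y)}{\pi y}\sum_{k=1}^{K}\chi_{M_k}(y).
$$
For $x\in G$ and $y\in M_k$ with $k\le K$, we have $0<x<y$, so $K(x,y)=\frac{b(y)}{\pi(y-x)}\ge\frac{b(y)}{\pi y}$. The supports are disjoint because $M_k\subseteq S_k$ (as $\mu<1/8$).

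To estimate $\phi$, note that the $S_k$ occupy a fixed fraction of each dyadic annulus and $w\approx x^{\varepsilon-1}$ off the $M_k$. The computation in Lemma~\ref{lemLB2} therefore gives $w(G)\gtrsim mr_K^{\varepsilon}$, hence $\|\phi\|_{L^{p,\infty}(w)}\gtrsim m^{1/p}r_K^{\varepsilon/p}$.

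For $\psi$, on $M_k$ we have $b(y)\ge \log\frac{r_{k+3}}{|y-r_k|}\ge\log\frac{1}{|t|}$, where $t=\frac{y-r_k}{\mu r_k}$. Substituting $t$ into the explicit formula for $\sigma$ on $M_k$ gives
$$
\int_{M_k}b^{p'}y^{-p'}\sigma\,dy\gtrsim r_k^{a+1-p'}\int_{-1}^1\mu|t|^{\mu-1}\log^{p'}\tfrac1{|t|}\,dt\approx r_k^{a+1-p'}\mu^{-p'}.
$$
Since $a+1-p'=-\frac{\varepsilon}{p-1}$, summing over $1\le k\le K=m$ gives the geometric sum $\sum_{k\le K}2^{k\varepsilon/(p-1)}\approx \frac{p-1}{\varepsilon}r_K^{-\varepsilon/(p-1)}$, which is saturated precisely because $K\approx m$. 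Therefore $\|\psi\|_{L^{p'}(\sigma)}\gtrsim \mu^{-1}m^{1/p'}r_K^{-\varepsilon/p}$.

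Multiplying, the powers of $r_K$ cancel and we obtain $m^{1/p}\cdot m^{1/(p-1)}\cdot m^{1/p'}=m^{p'}\approx[w_{p,m}]_{A_p}^{p'}$ by Lemma~\ref{lemLB2}.

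The main obstacle is the bookkeeping in the first estimate. Three points need care. First, excising the sets $S_k$ and the region near $-3$ from $(0,r_K)$ must not destroy the $m r_K^\varepsilon$ size of $w(G)$; the excised part near the $M_k$ is controlled via \eqref{eq:4.7}. Second, the lower bound $b\ge\log^+\frac{r_{k+3}}{|y-r_k|}$ on $M_k$ holds because all summands of $b$ are nonnegative. Third, the geometric sum must be checked to actually reach size $\approx m$ when $K=m$, i.e. $2^{m\varepsilon/(p-1)}=2^{1/(p-1)}$ is a fixed constant greater than $1$.
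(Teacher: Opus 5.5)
Your proposal is correct and follows essentially the paper's argument. For $w_{p,m}$ you apply Theorem~\ref{thm:SepVar} with an indicator supported where $b=0$ below scale $r_m$, paired against $b$ on the first $m$ spikes of $\sigma$. The paper uses $E=\bigcup_{k=m+3}^{2m+2}G_k$ and the left halves $J_k$ with weight $\frac{b}{\pi r_k}$, which differs from your $G$ and full $M_k$ with $\frac{b}{\pi y}$ only cosmetically. For $v$ you reduce to the first half of Proposition~\ref{prop:thm1.1.1} via $(-3,-2)$, losing a factor $2$, where the paper uses $(-4,-3)$ and a reflection.

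One small correction: the excised sets $S_k$ are much larger than $M_k$, so \eqref{eq:4.7} does not control them. The direct bound $\int_{S_k}x^{\varepsilon-1}\,dx\le\frac27 r_k^{\varepsilon}$, summed over $k\ge m$, gives at most $\frac47 m r_m^{\varepsilon}$. This still yields $w(G)\ge\frac37 m r_m^{\varepsilon}$, as you claim.
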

\begin{proof}
Let $w=w_{p,m}$ and define
\begin{equation*}
    G_k:=\biggl[\frac{5}{8}r_k, \frac{7}{8}r_k\biggr] \quad\text{and}\quad J_k:= (r_k-r_{k+3},r_k)
\end{equation*}
for $k\in \mathbb{N}$. Let 
\begin{equation*}
    E:=\bigcup^{2m+2}_{k=m+3}G_k,\quad \text{and} \quad F:=\bigcup^m_{k=1}J_k.
\end{equation*}
Note that if $x\in E$ and $y\in F$, then $x<y$ and $b(x)=0$. Therefore, for compactly supported $f\in L^r(\mathbb{R})$ with  $r\in(1,\infty)$, $b f\in L^1(\mathbb{R})$, and since $H$ is a CZO, we have 
\begin{equation*}
    [b,H] f(x)= \Bigl(b H(f)-H(b f)\Bigr)(x) = \int_{\mathbb{R}}-\frac{b(y)}{\pi (x-y)} f(y)\, dy
\end{equation*}
for almost every $x \in E$. Since $0<y-x<r_k$ for $x\in E$ and $y\in J_k$, the above equation implies that the kernel $K$ of $[b,H]$ satisfies 
\begin{equation*}
    K(x,y)\geq \chi_E(x) \biggl(\frac{b(y)}{\pi(y-x)}\chi_F(y)\biggr)\geq \chi_E(x)\biggl(\sum^m_{k=1} \frac{b(y)}{\pi r_k}\chi_{J_k}(y)\biggr)
\end{equation*}
for all $x\in E$ and $y\in F$.

Letting $q=p'$, by Theorem~\ref{thm:SepVar}, we have
\begin{equation}
\label{eq:4.6}
    \lVert [b,H]\rVert_{L^p(w)\to L^{p,\infty}(w)}\geq \biggl\lVert \sum^m_{k=1} \frac{b}{\pi r_k}\chi_{J_k}\biggr\rVert_{L^{q}(\sigma)}\lVert \chi_E\rVert_{L^{p,\infty}(w)}.
\end{equation}
Note $w(x)=x^{1/m-1}$ on $E$, and so
\begin{equation*}
\begin{split}
    \lVert \chi_E\rVert^p_{L^{p,\infty}(w)}&= \frac{(\frac{7}{8})^{\frac{1}{m}}-(\frac{5}{8})^\frac{1}{m}}{1/m}\sum^{2m+2}_{k=m+3}r_k^{\frac{1}{m}}\gtrsim \sum^{2m+2}_{k=m+3}1=m.
\end{split}
\end{equation*}
Since $J_1,\ldots,J_m$ are pairwise disjoint,
\begin{equation*}
     \biggl\lVert \sum^m_{k=1} \frac{b}{\pi r_k}\chi_{J_k}\biggr\rVert^q_{L^{q}(\sigma)}= \sum^m_{k=1} \biggl\lVert \frac{b}{\pi r_k}\chi_{J_k}\biggr\rVert^q_{L^{q}(\sigma)}.
\end{equation*}
Denote the $k^{\text{th}}$ summand on the right-hand side by $I(k)$.
The following equations describe how $\sigma$ and $b$ behave on $J^0_k:=((1-\mu_p)r_k,r_k)\subseteq J_k$: 
\begin{equation*}
\begin{split}
    &\sigma(y)=r_k^{(q-1)(1-\frac{1}{m})}\biggl(\frac{|y-r_k|}{\mu_p r_k}\biggr)^{\mu_p-1}, \quad \quad  y\in J_k^0,\\
    &b(y)=\log^+\biggl(\frac{r_k/8}{r_k-y}\biggr), \quad \quad y\in J^0_k.
\end{split}
\end{equation*}
If $a=(q-1)(1-\frac{1}{m})$, then
\begin{equation*}
\begin{split}
    I(k)&:=\int_{J_k} \biggl(\frac{b(\lambda)}{\pi r_k}\biggr)^{q}\sigma(\lambda)\, d\lambda\geq \int_{J_k^0} \biggl(\frac{b(\lambda)}{\pi r_k}\biggr)^{q}\sigma(\lambda)\, d\lambda\\
    &=\pi^{-q} r^{a-q}_k \int_{(1-\mu_p)r_k}^{r_k} \biggl[\log^+\biggl(\frac{r_k/8}{r_k-y}\biggr)\biggr]^{q} \biggl(\frac{r_k-y}{\mu_p r_k}\biggr)^{\mu_p-1}\ dy\\
    &\geq \pi^{-q} r^{a-q}_k \int_{(1-\mu_p)r_k}^{r_k} \biggl[\log\biggl(\frac{\mu_p r_k}{r_k-y}\biggr)\biggr]^{q} \biggl(\frac{r_k-y}{\mu_p r_k}\biggr)^{\mu_p-1}\ dy.
\end{split}
\end{equation*}
The last inequality comes from the fact that $\mu_p<\frac{1}{8}$.
Employing the monotone convergence theorem and two substitutions ($\xi=\frac{\mu_p r_k}{r_k-y}$ and $e^t=\xi^{\mu_p}$), we get
\begin{equation*}
\begin{split}
    I(k)&\geq \pi^{-q} r_k^{a+1-q}\mu_p^{-q}\Gamma(q+1).
\end{split}
\end{equation*}
By definition, $a+1-q=\frac{1}{m}(1-q)$, thus $\sum^m_{k=1} r_k^{a+1-q}>m2^{(1-q)}$. Therefore,
\begin{equation*}
\begin{split}
    \biggl\lVert \sum^m_{k=1} \frac{b}{\pi r_k}\chi_{J_k}\biggr\rVert_{L^{q}(\sigma)}=\biggl(\sum^m_{k=1}I(k)\biggr)^\frac{1}{q}\gtrsim \mu_p^{-1}m^{\frac{1}{q}}.
\end{split}
\end{equation*}

In conclusion, by \eqref{eq:4.6}, 
\begin{equation*}
\lVert [b,H]\rVert_{L^p(w)\to L^{p,\infty}(w)}\gtrsim \bigl(\mu_p^{-1}m^\frac{1}{q}\bigr) m^\frac{1}{p}=m^{q}\approx [w]^{p'}_{A_p}.
\end{equation*}

Now, let $w(x)=|x|^{\alpha-1}$ and $v(x)=w(x+3)$, with $\alpha\in(0,1)$. Denote $\sigma = w^{1-p'}$ and $\rho=v^{1-p'}$. See that $b$ equals $-\log|\cdot +3|$ on $X:=(-4,-3)$, and $b=0$ on $Y:=(-\infty,-4)$, so
\begin{equation*}
    K(x,y)=\frac{b(x)-b(y)}{\pi(x-y)}= \frac{-\log |x+3|}{\pi(x-y)}\geq \frac{-\log|x+3|}{\pi|y+3|}
\end{equation*}
for all $x\in X$ and $y\in Y$. 
Then, by Theorem~\ref{thm:SepVar},
\begin{equation*}
    \|[b,H]\|_{L^p(v)\to L^{p,\infty}(v)}\geq  \Bigl\lVert \frac{1}{\pi |\cdot+3|}\chi_{Y}\Bigr\rVert_{L^{p^\prime}(\rho)}\lVert -\log|\cdot+3|\chi_{X}\rVert_{L^{p,\infty}(v)}.
\end{equation*}
Under the reflection $x\mapsto -x-3$, the right-hand side becomes
\begin{equation*}
\Bigl\lVert \frac{1}{\pi |\cdot|}\chi_{(1,\infty)}\Bigr\rVert_{L^{p^\prime}(\sigma)}\lVert \log|\cdot|\chi_{(0,1)}\rVert_{L^{p,\infty}(w)}.
\end{equation*}
By Proposition~\ref{prop:thm1.1.1}, this expression is comparable to $[w]_{A_p}^2= [v]_{A_p}^2$. This finishes the proof.
\end{proof}

\subsubsection{Proof of the lower bound} 
We now establish the general lower bound, Theorem \ref{thm:GeneralLower}.

\begin{proof}[Proof of Theorem \ref{thm:GeneralLower}] Let $w_\alpha=|x+3|^{\alpha-1}$ for each $\alpha\in(0,1)$. Take $M>0$ so that \mbox{$[w_\alpha]_{A_p}\leq   M\frac{1}{\alpha}$}. For $t>M$, fix $\alpha=\frac{M}{t}$, and see that $[w_\alpha]_{A_p}\approx \frac{1}{\alpha} \approx t$. Then by Proposition~\ref{prop4.5o}
\begin{equation*}
    \varphi_{b,H}(t)\geq \lVert [b,H]\rVert_{L^p(w_\alpha)\to L^{p,\infty}(w_\alpha)}\gtrsim t^{2}.
\end{equation*}
Let $M>0$ be such that $ [w_{p,m}]_{A_p}\leq M m$ for every $m\geq N_p$. Let $t> MN_p$, $m= \lfloor t/M\rfloor$, and $w=w_{p,m}$. See that $[w]_{A_p}\approx m\approx t$, and in particular $[w]_{A_p}\leq t$. Then by Proposition~\ref{prop4.5o},
\begin{equation*}
    \varphi_{b,H}(t)\geq \|[b,H]\|_{L^p(w)\rightarrow L^{p,\infty}(w)} \gtrsim  t^{p^\prime}.
\end{equation*}
The reverse inequality follows from \eqref{eq:GeneralUpper}. 
\end{proof}

\section*{Artificial Intelligence Statement} 

The authors used ChatGPT (GPT-5.6 Sol and GPT-6 Astra) for literature review, which led to ideas inspiring the bound in Proposition \ref{prop:CommutatorHardyBound} and a construction leading to Theorem \ref{thm:GeneralLower}. After ChatGPT audited a preliminary version of this paper, the LLM suggested an updated symbol construction for Theorem \ref{thm:GeneralLower}, which is significantly simpler than our earlier example; this updated example is the impetus for our proof in Section \ref{section:GeneralLoer}. Additional AI use was limited to language editing and proofreading. The authors independently verified, validated, and rewrote all parts of the paper influenced by AI-generated material and take full responsibility for the paper's mathematical content.


\end{document}